\documentclass[11pt,a4paper]{article}
\usepackage[T1]{fontenc}
\usepackage[utf8]{inputenc}
\usepackage{lmodern}
\usepackage[margin=29mm]{geometry}
\usepackage{amsmath,amssymb,amsthm,mathtools}
\usepackage{microtype}
\usepackage{enumitem}
\usepackage{xcolor}
\usepackage{hyperref}
\hypersetup{colorlinks=true,linkcolor=black,citecolor=blue!45!black,
urlcolor=blue!45!black,pdftitle={Finite Time Singularity of the Kahler-Ricci Flow on compact Kahler surfaces are of Type I},
pdfauthor={Yeyun Xu and Linfeng Zhou}}
\numberwithin{equation}{section}
\newtheorem{theorem}{Theorem}[section]
\newtheorem{proposition}[theorem]{Proposition}
\newtheorem{lemma}[theorem]{Lemma}

\theoremstyle{definition}

\theoremstyle{remark}
\newtheorem{remark}[theorem]{Remark}
\newcommand{\R}{\mathbb R}
\newcommand{\C}{\mathbb C}
\newcommand{\Z}{\mathbb Z}
\newcommand{\PP}{\mathbb P}
\newcommand{\Ric}{\operatorname{Ric}}
\newcommand{\Rm}{\operatorname{Rm}}
\newcommand{\Vol}{\operatorname{Vol}}
\newcommand{\diam}{\operatorname{diam}}

\newcommand{\Bl}{\operatorname{Bl}}
\newcommand{\reg}{\mathrm{reg}}

\DeclareMathOperator{\supp}{supp}

\DeclareMathOperator{\PD}{PD}
\setlist{itemsep=3pt,topsep=5pt}
\title{Finite Time Singularities of the K\"ahler--Ricci Flow\\
on Compact K\"ahler Surfaces are of Type~I}
\author{Yeyun Xu \and Linfeng Zhou}
\date{}

\begin{document}
\maketitle
\begin{NoHyper}
\begingroup
\renewcommand{\thefootnote}{}
\footnotetext{Supported by Fundamental and Interdisciplinary Disciplines
Breakthrough Plan of the Ministry of Education of China(JYB2025XDXM112).}
\endgroup
\end{NoHyper}
\vspace{-1.5em}
\begin{abstract}
We establish a Type~I curvature bound for finite-time volume-collapsing
K\"ahler--Ricci flows on compact K\"ahler surfaces. No symmetry assumption
is imposed, and the initial K\"ahler class need not be rational.
The proof combines local symplectic topology with the classification
of gradient K\"ahler--Ricci shrinking solitons \cite{LW}.
Bamler's compactness and structure theory
\cite{BamEntropy,BamCompact,BamStructure} provides the shrinking limits
used in the argument.
\end{abstract}

\section{Introduction}

\noindent\hspace*{2em}Finite-time singularities of the K\"ahler--Ricci flow
on compact complex surfaces have been understood in several geometric
regimes. For extinction in an anticanonical class, the soliton existence
results of Tian \cite{TianDelPezzo} and Wang--Zhu \cite{WZ}, together with
normalized-flow convergence \cite[Corollaries~1.5 and~4.3]{DS}, give a
Type~I curvature bound. In the non-collapsing case,
$\liminf_{t\nearrow T}\Vol(X,g(t))>0$, Conlon--Hallgren--Ma
\cite[Theorem~A]{CHM} proved the global Type~I estimate without a
symmetry assumption. For volume collapse, Tosatti--Zhang
\cite[Proposition~2.5 and Corollary~2.6]{TZcollapse} reduce the
non-extinction case to a Fano fibration over a curve. Xu--Zhang
\cite{XZ} obtain a global Type~I estimate when the fibration has no
singular fibres, and estimates away from the singular fibres in general.

Our main result gives the volume-collapsing estimate by a local
symplectic argument. Together with \cite[Theorem~A]{CHM}, it implies
that every finite-time singularity of the K\"ahler--Ricci flow on a
compact K\"ahler surface is of Type~I.
\begin{theorem}\label{thm:main}
Let $(X,J,\omega_0)$ be a connected compact K\"ahler surface, and let
$\omega(t)$ be the maximal solution of
\begin{equation}\label{eq:krf}
 \partial_t\omega(t)=-\Ric(\omega(t)),\qquad
 \omega(0)=\omega_0,\qquad 0\le t<T<\infty.
\end{equation}
Suppose that
\[
 \lim_{t\nearrow T}\Vol(X,\omega(t))
 =\lim_{t\nearrow T}\frac12\int_X\omega(t)^2=0.
\]
If $g(t)$ is the Riemannian metric associated with $\omega(t)$, then
there is a constant $C<\infty$, depending only on the initial flow,
such that
\begin{equation}\label{eq:typeI}
 \sup_X|\Rm(g(t))|_{g(t)}\le\frac{C}{T-t},
 \qquad 0\le t<T.
\end{equation}
In particular, the singularity at the maximal time $T$ is of Type~I.
\end{theorem}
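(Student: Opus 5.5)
We argue by contradiction via blow-up, in the spirit of the non-collapsing argument of \cite{CHM}, and use the collapsing structure only to restrict the possible shrinking limits. Suppose \eqref{eq:typeI} fails. Then there are times $t_i\nearrow T$ and points $x_i$ with $(T-t_i)|\Rm|(x_i,t_i)\to\infty$. A point-picking argument along the flow lets us assume in addition that $x_i$ nearly maximizes $(T-t)|\Rm|$ on $X\times[0,t_i]$.

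We then form the parabolically rescaled flows $g_i(s)=(T-t_i)^{-1}g(t_i+(T-t_i)s)$, based at $(x_i,0)$. Perelman's entropy is bounded below on the compact surface for $t<T$. Bamler's compactness theory \cite{BamCompact} therefore gives subsequential $\mathbb F$-convergence to a metric flow, and his structure theory \cite{BamStructure} identifies a tangent flow at the singular time as a gradient shrinking Ricci soliton. Its singular set has codimension $\ge4$; in real dimension $4$ this means isolated orbifold-type points. Kählerness passes to the regular part, and the limit is a possibly singular gradient Kähler--Ricci shrinker of complex dimension $2$.

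Next we use the collapsing hypothesis. Since $\Vol(X,\omega(t))=\frac12[\omega(t)]^2\to0$ and $[\omega(t)]=[\omega_0]-2\pi t\,c_1(X)$, the limiting class $\alpha_T$ is nef with $\alpha_T^2=0$. By \cite{TZcollapse}, either $X$ is Fano and extinguishes, with $\alpha_T=0$, or $X$ is a ruled surface fibering over a curve with $\alpha_T$ pulled back from the base.

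\begin{itemize}
\item \emph{Extinction case.} The argument of \cite{DS} already gives Type~I.
\item \emph{Fibration case.} We take the local symplectic viewpoint. Near any point, the fibre class $F$ of the ruling has $[\omega(t)]\cdot F=2\pi(T-t)\,c_1(X)\cdot F=4\pi(T-t)$. By Gromov-type existence of $J$-holomorphic spheres in the fibre class, which is robust without rationality of $[\omega_0]$, every point lies on an embedded or nodal holomorphic sphere of area $\le 4\pi(T-t)$. After rescaling, every point of the blow-up limit therefore lies on a compact holomorphic curve of bounded area and self-intersection $0$, or is a component of a nodal one.
\end{itemize}

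We then run through the classification of \cite{LW}. The only complete noncompact Kähler shrinking surfaces in the relevant class with such a foliation by compact rational curves are $\C\times\PP^1$ and its quotients and flat-bundle variants, or $\C^2$ with the holomorphic curve condition ruled out. All of these have bounded curvature. Their curvature equals $1/(2\cdot\text{sphere radius}^2)$, which is determined by the area bound and is uniform. This contradicts the unbounded curvature $(T-t_i)|\Rm|(x_i,t_i)\to\infty$ at the basepoint, provided the $\mathbb F$-convergence improves to smooth convergence near $x_i$.

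The main obstacle is this last upgrade: ruling out that the basepoint sits on a singular point of the tangent flow, or escapes in Bamler's sense. We would try two ingredients first. One is the \emph{$\epsilon$-regularity} of \cite{BamStructure}, since the limit is a smooth cylinder with Nash entropy near that of $\C\times\PP^1$. The other is the \emph{area bound} for the fibre spheres through $x_i$, which gives local noncollapsing at scale $\sqrt{T-t}$ and prevents the curvature concentration from occurring at a singular fibre. Handling the singular fibres, where the sphere is nodal and the $(-1)$-curves shrink at rate comparable to $T-t$, is the step we expect to be hardest. There we would use the local symplectic neighbourhood of the nodal curve and the explicit $\C\times\PP^1$ and blow-up shrinker models from \cite{LW}. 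Together these should show that no shrinker with a nodal fibre of bounded area and unbounded curvature exists.
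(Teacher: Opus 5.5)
\textbf{There is a genuine gap.} The step you defer to the end is not a technical upgrade; it is the entire content of the theorem.

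By Bamler's $\epsilon$-regularity, $(T-t_i)|\Rm|(x_i,t_i)\to\infty$ forces the pointed Nash entropy at $(x_i,t_i)$ to be $\le-\varepsilon_0$ at every fixed scale $a(T-t_i)$ for large $i$. By entropy continuity, the basepoint $x_\infty$ of your scale-$\sqrt{T-t_i}$ limit then has $\mathcal N_{x_\infty}(a)\le-\varepsilon_0$ for all $a>0$. So $x_\infty$ is always a singular point of the limit. Smooth convergence near $x_i$ can never be obtained, and a contradiction of the form ``the model has bounded curvature but the sequence does not'' is circular.

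Several further steps also fail at this scale. The rescaling $g_i$ based at $(x_i,0)$ yields only an ancient metric flow, not a tangent flow, so Bamler's structure theory does not make it a shrinker. Moreover, \cite[Theorem~1.1]{LW} classifies smooth shrinkers, so the possible orbifold points, which is where the curvature concentrates, must be removed before it can be used. The bounded-area fibre spheres also point the wrong way: $\PP^1\times\C$ and $\Bl_p(\PP^1\times\C)$ are smooth shrinkers carrying exactly such curves, i.e.\ the expected Type~I models at this scale, so their appearance gives no contradiction. Passing those curves to the limit would in any case need more than an area bound.

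Neither remedy you propose helps. Bamler's $\epsilon$-regularity requires entropy close to $0$, not close to that of $\C\times\PP^1$. An area bound on holomorphic spheres controls neither the ambient curvature nor the orbifold structure, and noncollapsing is already automatic from Perelman's entropy.

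The paper's proof supplies three ingredients that your plan lacks.

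First, it takes a tangent flow of the scale-$\sqrt{T-t_i}$ limit at its singular basepoint and realizes it diagonally in the original flow, following \cite[Lemma~2.11]{BCCD}. This gives a genuine K\"ahler shrinker orbifold with constant entropy $\Theta\le-\varepsilon_0<0$, seen at scales $r_j$ with $(T-s_j)/r_j^2\to\infty$.

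Second, at these smaller scales the fibre curves disappear rather than survive. Each integral period of $r_j^{-2}\omega(s_j)$ over a cycle $B_j$ in a fibre neighbourhood equals $2\pi\frac{T-s_j}{r_j^2}k_j$ with $k_j=\langle c_1(X),[B_j]\rangle\in\Z$. Boundedness of the periods forces $k_j=0$, so the limiting K\"ahler form is exact (Proposition~\ref{prop:exact}), which excludes all compact curves.

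Third, and this is the genuinely new step, orbifold points are excluded symplectically. Such a point yields flat $\C^2/\Gamma$ annuli in the original flow, lying in a single fibre neighbourhood $f^{-1}(D)$. These bound strongly convex fillings of $(S^3/\Gamma,\xi_{\mathrm{std}})$ with negative definite intersection form. Lemma~\ref{lem:noannulus}, via Theorem~\ref{thm:obstruction}, rules these out by replacing the filling with the minimal resolution and comparing Chern numbers against the retained ruling.

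Only after these three steps does \cite[Theorem~1.1]{LW} reduce the limit to $\C^2$, which contradicts $\Theta<0$. Your treatment of the extinction case via \cite{DS}, with soliton existence from \cite{TianDelPezzo,WZ}, matches the paper.
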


\begin{remark}\label{rem:higher-dimensions}
The conclusion fails in higher complex dimensions. Miao--Tian
\cite[Theorem~1.1]{MT} exhibit smooth Fano threefolds in Mori--Mukai
family No.~2.23 whose normalized K\"ahler--Ricci flows $\widehat g(s)$
have unbounded curvature. In the anticanonical normalization,
\[
 \begin{aligned}
 g(t)&=(1-t)\widehat g(-\log(1-t)),\qquad 0\le t<1,\\
 \sup_{0\le t<1}(1-t)\|\Rm(g(t))\|_\infty
 &=\sup_{s\ge0}\|\Rm(\widehat g(s))\|_\infty=\infty.
 \end{aligned}
\]
The unnormalized flow has maximal time $1$ and
$\Vol(X,g(t))=(1-t)^3\Vol(X,g(0))\to0$, so these are finite-time
volume-collapsing Type~II singularities.
\end{remark}

The Type~I conclusion of Theorem~\ref{thm:main} also appears in
Xu--Zhang \cite{XZTypeI} and
Cifarelli--Conlon--Hallgren--Zhang \cite{CCHZ}, but the methods differ.
Xu--Zhang establish smoothness of tangent shrinkers through their Fano
fibration structure and the topology of singular-fibre neighbourhoods;
their global estimate then uses secondary tangent flows, local
cohomology, and curvature control from the Bamler--Cifarelli--Conlon--Deruelle
(BCCD) model \cite[Sections~3--4]{XZTypeI}.
Cifarelli--Conlon--Hallgren--Zhang reduce possible orbifold singularities
to Wahl singularities and exclude them by comparing intersection-matrix
traces; an entropy gap between the smooth tangent models and nontrivial
flat quotient cones then rules out Type~II blow-up
\cite[Sections~4.5--4.6 and~5]{CCHZ}.

Our proof uses a symplectic obstruction inside a single fibre
neighbourhood. Its intersection form has negative index at most one.
Replacing a hypothetical filling of a nontrivial spherical quotient
by its minimal resolution, we compare the exceptional configuration
with a retained rational ruling and obtain incompatible Chern-number
identities. This excludes flat quotient annuli in the original flow
and hence orbifold points in smaller-scale shrinking limits. At scales
$r_i^2\ll T-t_i$, the cohomological evolution forces the limiting
K\"ahler form to be exact. The smooth shrinker classification then
leaves only the Gaussian model, contradicting the strictly negative
entropy of the extracted limit.

The symplectic ingredients are the rational rulings and blow-up
description in \cite[Section~4]{Markov}, the structure of genus-zero
curves in nef classes \cite[Theorem~1.5]{LZnef}, and the contact and
resolution theory of quotient singularities
\cite[Section~3.A]{CNPP}, \cite[Section~2.2]{CP}.
Bamler's entropy estimates, compactness, and structure theorems
\cite{BamEntropy,BamCompact,BamStructure} provide the shrinking limits;
the two-stage extraction follows \cite[Lemma~2.11]{BCCD}.
Li--Wang's classification \cite[Theorem~1.1]{LW}, which requires no
a priori curvature bound, supplies the final list of smooth models.

The paper is organized as follows.
Section~\ref{sec:geometry} records the fibration geometry and local
topology. Section~\ref{sec:filling} proves the spherical quotient
filling obstruction. Section~\ref{sec:limits} applies it to the
K\"ahler--Ricci flow and establishes exactness of smaller-scale
limits. Section~\ref{sec:tangents} constructs the shrinking limits
and proves Theorem~\ref{thm:main}.

\paragraph{AI Usage Statement:}
This work arose while the author was thinking about the evolution of the
K\"ahler--Ricci flow on $\operatorname{Bl}_{p_1,p_2}\mathbb{CP}^2$.
During the preparation of this work, the author made substantial use of
ChatGPT as a research assistant. In particular, it was used to compute
relevant concepts in algebraic geometry, to help the author learn,
understand, and use algebraic topology and Bamler's theory, and to draft
and polish portions of the manuscript. The direction and strategy of the
proof are the author's own. The author verified all AI-assisted output and
takes full responsibility for the correctness of this publication.

\paragraph{Acknowledgment.}
The first author thanks the geometric analysts
who participated in the summer school at Nanjing University and
the participants in the Differential Geometry Group Seminar in ECNU for
helpful discussions. The second  author would like to acknowledge the support of the JYB2025XDX112 fund.

\section{Geometry of the collapsing fibration}\label{sec:geometry}

We use the flow and maximal time in Theorem~\ref{thm:main}, with
$[\Ric(\omega)]=2\pi c_1(X)$, and set
\begin{equation}\label{eq:class}
 \tau=T-t,\qquad
 \alpha=[\omega_0]-2\pi T c_1(X),\qquad
 [\omega(t)]=\alpha+2\pi\tau c_1(X)
 \quad\text{in }H^2(X;\R).
\end{equation}
The class $\alpha$ is nef, and volume collapse is equivalent to
$\alpha^2=0$. In this section, assume
$\alpha^2=0$ and $\alpha\ne0$.

\begin{proposition}[Fibration reduction]\label{prop:reduction}
Suppose that $\alpha^2=0$ and $\alpha\ne0$. There is a holomorphic
surjection with connected fibres
\[
 f:X\longrightarrow\Sigma
\]
onto a smooth compact curve, with generic fibre $\PP^1$, and a K\"ahler
form $\beta$ on $\Sigma$ such that $\alpha=f^*[\beta]$.
Moreover, for a constant $c>0$ independent of $t$,
\begin{equation}\label{eq:schwarz}
 \omega(t)\ge c f^*\beta,\qquad 0\le t<T.
\end{equation}
Consequently $f:(X,g(t))\to(\Sigma,g_\beta)$ is uniformly
$c^{-1/2}$-Lipschitz.
\end{proposition}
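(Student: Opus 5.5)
Our plan is to follow the approach of Song--Weinkove and Tosatti--Zhang for the structure of $\alpha$, and then a parabolic Schwarz lemma for \eqref{eq:schwarz}.

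\emph{Step 1: the fibration.} Since $T<\infty$ and $\alpha=[\omega_0]-2\pi Tc_1(X)$ lies on the boundary of the Kähler cone, $\alpha$ is nef and not Kähler. The hypothesis $\alpha^2=0$, $\alpha\neq0$ gives $\alpha\cdot[\omega_0]>0$. Expanding $0=\alpha^2=\alpha\cdot([\omega_0]-2\pi T c_1)$ then yields $\alpha\cdot K_X=-\alpha\cdot[\omega_0]/(2\pi T)<0$. Hence $K_X$ is not nef, and $X$ is not of general type or Kodaira dimension $\ge0$ minimal. We will follow the argument of \cite[Proposition~2.5]{TZcollapse}. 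First, $\kappa(X)=-\infty$, so $X$ is projective-ruled or rational, with no exotic non-Kähler issues since $X$ is Kähler. Next, on a non-minimal surface a $(-1)$-curve $E$ with $\alpha\cdot E=0$ would give finite-time contraction with positive volume, which contradicts $\alpha^2=0$. So $\alpha$ is positive on all $(-1)$-curves. By Mori's cone theorem for surfaces (valid for nef real classes), $\alpha$ is then a supporting class of an extremal ray of $\overline{NE}(X)$ generated by the class of a rational curve $\ell$ with $\ell^2=0$ and $K_X\cdot\ell=-2$. The classification of extremal contractions shows that the contraction of this ray is a $\PP^1$-fibration $f:X\to\Sigma$ over a smooth curve. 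Reducible singular fibres are allowed here, because blow-ups of ruled surfaces on which $\alpha$ vanishes on a fibre component cannot occur: all fibre components other than full fibres have negative square. The kernel of $\alpha$ is spanned by the fibre class, and $\alpha^2=0$, so $\alpha$ is a positive multiple of the fibre class $\PD[F]$ in $H^{1,1}$. Since $f^*:H^2(\Sigma;\R)\to H^2(X;\R)$ sends the point class to $\PD[F]$, we get $\alpha=f^*[\beta]$ for any positive area form $\beta$ of the right total mass. If the non-rationality of $[\omega_0]$ is a concern, we will argue directly with the nef class via the Hodge index theorem, in the spirit of \cite{TZcollapse}, without invoking Mori theory for $\R$-divisors.

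\emph{Step 2: the Schwarz lemma.} Fix a Kähler metric $g_\beta$ on $\Sigma$ with curvature bounded above by $K$, and set $u=\operatorname{tr}_{\omega(t)}f^*\beta$. The standard parabolic Chern--Lu/Schwarz computation (Song--Tian) gives
\[
(\partial_t-\Delta_{\omega(t)})\log u\le K u
\]
wherever $u>0$, with no Ricci term of $\omega(t)$ thanks to the flow equation. To absorb the $Ku$ term we use the potential. Write $\omega(t)=\hat\omega_t+\sqrt{-1}\partial\bar\partial\varphi$, where $\hat\omega_t=f^*\beta+\tfrac{\tau}{T}(\omega_0-f^*\beta)$ is a reference family; this needs $\omega_0-f^*\beta$ to represent $-2\pi T c_1(X)$, which holds after adjusting representatives. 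The known bounds $|\varphi|\le C$ and $\dot\varphi\le C$ hold; the former comes from comparing with $\alpha$ semipositive and is standard in \cite{TZcollapse} and Song--Weinkove. Then $\Delta\varphi=2-\operatorname{tr}_\omega\hat\omega_t$. Since $\hat\omega_t\ge f^*\beta$ up to a bounded correction, we apply the maximum principle to $Q=\log u-A\varphi$ with $A\gg K$. This gives $(\partial_t-\Delta)Q\le Ku-Au+C(A)<0$ at a large maximum, so $u\le C$, which is equivalent to \eqref{eq:schwarz}. The Lipschitz bound follows immediately, since $|df|^2_{g(t),g_\beta}\le u$.

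We expect the main obstacle to be Step 1, namely getting the full fibration statement (connected fibres, generic fibre $\PP^1$, and $\alpha$ a pullback) for a merely real nef class with $\alpha^2=0$. Our first attempt will be to reduce to \cite[Proposition~2.5]{TZcollapse}, treating it as the citation the paper intends.
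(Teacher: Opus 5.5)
Your proposal has two gaps. The Schwarz-lemma step does not close as written, and the Mori-theoretic sketch in Step~1 rests on false claims.

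\textbf{Step 2.} With $\dot\varphi=\log(\omega^2/\Omega)$ one has
$(\partial_t-\Delta)Q\le Ku-A\dot\varphi+2A-A\operatorname{tr}_\omega\hat\omega_t$.
Absorbing $-A\dot\varphi$ into $C(A)$ therefore requires a \emph{lower} bound on $\dot\varphi$. The bound you quote, $\dot\varphi\le C$, points the wrong way, and no lower bound exists here: $\int_Xe^{\dot\varphi}\Omega=\int_X\omega(t)^2\to0$, so $\min_X\dot\varphi\to-\infty$.

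Renormalizing $\Omega$ by a factor $\tau$ does not help. Controlling $\log(\tau\Omega/\omega^2)$ by $\log\operatorname{tr}_\omega\hat\omega_t$ needs $\hat\omega_t^2\ge c\,\tau\,\Omega$. This fails at the nodes of the singular fibres, where $f^*\beta=0$ and $\hat\omega_t^2=\tau^2T^{-2}\omega_0^2$. If you instead control the term through $\operatorname{tr}_\omega\omega_0$, you only get $u\le C\log(1/\tau)$.

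The standard repair is to replace $\varphi$ by $\psi=\varphi+(T-t)\dot\varphi$. Take $\hat\omega_t=\frac{\tau}{T}\omega_0+\frac tTf^*\beta$ and $\Ric(\Omega)=\frac1T(\omega_0-f^*\beta)$; note that $\omega_0-f^*\beta$ represents $+2\pi Tc_1(X)$, not $-2\pi Tc_1(X)$, and no adjustment is needed. One computes $(\partial_t-\Delta)\psi=\operatorname{tr}_\omega f^*\beta-2=u-2$. Then:
\begin{itemize}
\item $Q=\log u-A\psi$ satisfies $(\partial_t-\Delta)Q\le(K-A)u+2A$;
\item the minimum principle gives $\psi\ge-C$;
\item $\varphi\le C$ and $\dot\varphi\le C$ give $\psi\le C$.
\end{itemize}
Together these yield $u\le C$. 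With this change, Step~2 becomes a self-contained proof of \eqref{eq:schwarz}; the paper does not prove this bound and simply cites \cite[Lemma~2.2]{XZ}.

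\textbf{Step 1.} It is not true that $\alpha$ is positive on every $(-1)$-curve. By Lemma~\ref{lem:fibres}, each singular fibre is $E\cup C$ with $E^2=C^2=-1$, and $\alpha=f^*[\beta]$ vanishes on both components; $\alpha\cdot E=0$ does not force $\alpha^2>0$. Your sentence about reducible fibres contradicts your own positivity claim.

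Likewise, when singular fibres exist, $[F]=[E]+[C]$ is not extremal. So $\alpha$ does not support a ray spanned by a curve of square zero. Contracting such a ray would only produce a $\PP^1$-bundle, which is exactly the case without singular fibres.

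The correct argument uses the whole face $\alpha^\perp\cap\overline{NE}(X)$. This face is $K_X$-negative, since $K_X\cdot z=-[\omega_0]\cdot z/(2\pi T)<0$ for every nonzero $z$ in it. One contracts this face, using $H^{2,0}(X)=0$ so that the real class $\alpha$ lies in $\mathrm{NS}(X)_{\R}$; this is what removes the rationality issue. Then $\alpha^2=0\neq\alpha$ forces a one-dimensional normal, hence smooth, base. That is the content of \cite[Proposition~2.5 and Corollary~2.6]{TZcollapse}, which the paper cites. Your stated fallback to that citation is the right route, but the sketch preceding it should be dropped.
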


\begin{proof}
The existence of a Fano fibration and the equality of limiting classes
follow from \cite[Proposition~2.5 and Corollary~2.6]{TZcollapse}.
These statements apply to real N\'eron--Severi classes; in dimension two
the argument uses $H^{2,0}(X)=0$ and imposes no rationality condition on
$[\omega_0]$. Their base is normal and has dimension strictly less than
two. Since the pullback class is nonzero, the base has dimension one.
A normal complex algebraic curve is smooth
\cite[Tag~0B8Y]{Stacks}. The generic fibre is a smooth connected Fano
curve, hence $\PP^1$.

The global lower bound \eqref{eq:schwarz} is
\cite[Lemma~2.2]{XZ}; it is the lower-bound part of the Schwarz estimate
and holds also on the singular fibres. Since $f$ is holomorphic,
\eqref{eq:schwarz} gives $|df(v)|_{g_\beta}^2\le c^{-1}|v|_{g(t)}^2$.
Integrating along paths proves the Lipschitz assertion.
\end{proof}

\begin{lemma}\label{lem:fibres}
Under the assumptions of Proposition~\ref{prop:reduction}, every
singular fibre is reduced and has the form $E\cup C$, where
\begin{equation}\label{eq:twocurves}
 E^2=C^2=-1,\qquad E\cdot C=1.
\end{equation}
Both components are smooth rational curves. Near their intersection
there are holomorphic coordinates in which $f(u,v)=uv$.
There are no multiple fibres.
\end{lemma}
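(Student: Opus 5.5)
The plan is purely numerical. We apply adjunction and Zariski's lemma to the fibration $f$ of Proposition~\ref{prop:reduction}, using positivity of $c_1(X)$ on the components of fibres.

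\emph{Positivity of $c_1(X)$ on fibre components.} Let $D$ be an irreducible curve contained in a fibre of $f$. Since $\alpha=f^*[\beta]$ and $f(D)$ is a point, we have $\alpha\cdot D=0$. For every $t<T$ the class $[\omega(t)]=\alpha+2\pi\tau c_1(X)$ is K\"ahler, so $\int_D\omega(t)>0$. Hence $c_1(X)\cdot D>0$, that is, $K_X\cdot D\le -1$, since this is an integer.

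A general fibre $F$ is a smooth $\PP^1$ with $F^2=0$, so adjunction gives $K_X\cdot F=-2$. All of these intersection numbers are cohomological, so projectivity of $X$ is not needed.

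\emph{Decomposing a fibre.} Write a fibre as $f^*(p)=\sum_i m_iD_i$, with $m_i\ge1$ and $D_i$ irreducible. Then
\[
-2=K_X\cdot F=\sum_i m_iK_X\cdot D_i\le-\sum_i m_i .
\]
This leaves three cases.

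\begin{itemize}
\item \emph{Irreducible and multiple.} Suppose $F=mD$. Then $D^2=0$ and $K_X\cdot D=-2/m\in\Z$, so $m\in\{1,2\}$. If $m=2$, then $K_X\cdot D=-1$ and $D^2=0$. Adjunction then gives $2p_a(D)-2=-1$, which is impossible. This rules out multiple fibres, including those whose support is reducible, since in that case the case below forces all $m_i=1$.
\item \emph{Irreducible and reduced.} Here $K_X\cdot D=-2$ and $D^2=0$, so $p_a(D)=0$. Thus $D$ is a smooth rational curve and a reduced fibre, so $f$ is a submersion along it and the fibre is not singular.
\item \emph{Reducible.} There are at least two components and $\sum_i m_i\le 2$. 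So there are exactly two components, $E$ and $C$, each of multiplicity one. Zariski's lemma gives $E^2<0$ and $C^2<0$. Since $K_X\cdot E\le-1$, adjunction $E^2+K_X\cdot E=2p_a(E)-2\ge-2$ forces $E^2=-1$, $K_X\cdot E=-1$ and $p_a(E)=0$. Hence $E$ is a smooth rational curve, and the same holds for $C$. Finally $0=F\cdot E=E^2+E\cdot C$ gives $E\cdot C=1$.
\end{itemize}

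\emph{Local normal form.} Since $E\cdot C=1$, the curves $E$ and $C$ meet transversally in exactly one point $q$. Choose local holomorphic equations $u=0$ for $E$ and $v=0$ for $C$ near $q$; these are coordinates by transversality. The fibre is reduced, so $f-f(q)$ vanishes to order exactly one along each component. Therefore $f-f(q)=uv\cdot h$ with $h$ a nowhere-vanishing holomorphic function near $q$. Replacing $u$ by $uh$ gives $f=uv$ after translating the base coordinate.

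\emph{Expected obstacle.} The delicate step is the positivity $K_X\cdot D\le-1$ on every fibre component, including components of singular fibres. This relies on $[\omega(t)]$ being K\"ahler for all $t<T$, and on $\alpha$ restricting trivially to each such component. Both follow from \eqref{eq:class} and Proposition~\ref{prop:reduction}. Once this is in place, everything else is integer bookkeeping.
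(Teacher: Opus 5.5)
Your proposal is correct and follows essentially the paper's argument. It uses the same ingredients: positivity of $c_1(X)$ on vertical components (from $\alpha\cdot D=0$ and Kählerness of $[\omega(t)]$), negativity of the components of a reducible fibre, adjunction, and the local normal form $f=uv$. The one difference is that you get ``exactly two components, each of multiplicity one'' directly from $2=c_1(X)\cdot F\ge\sum_i m_i$. The paper instead first proves $D_j^2=-1$ and $p_a(D_j)=0$, then runs a weighted degree count on the dual graph, and only afterwards uses $c_1(X)\cdot F_q=2$ to fix the multiplicities; your count is slightly more direct.
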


\begin{proof}
The two-component conclusion is \cite[Lemma~2.1]{XZ}. The following
argument also records the multiplicities needed for the local topology.
A smooth generic fibre $F$ has $F^2=0$ and
$c_1(X)\cdot F=2$. Write a reducible fibre as
$F_q=\sum_jm_jD_j$, with $D_j$ irreducible and reduced and $m_j>0$.
Connectedness and $F_q\cdot D_j=0$ imply $D_j^2<0$. As $D_j$ is vertical,
\[
 c_1(X)\cdot D_j=\frac{[\omega_0]\cdot D_j}{2\pi T}>0.
\]
Adjunction for the arithmetic genus gives
\[
 c_1(X)\cdot D_j=2-2p_a(D_j)+D_j^2.
\]
It follows that $p_a(D_j)=0$ and $D_j^2=-1$. The normalization formula
for arithmetic genus then shows that $D_j$ is smooth and rational.
The intersection equations become
\[
 m_j=\sum_{\ell\ne j}m_\ell(D_j\cdot D_\ell).
\]
Set $d_\ell=\sum_{j\ne\ell}D_j\cdot D_\ell$. Summing the preceding
equalities gives $\sum_\ell m_\ell(d_\ell-1)=0$. Each $d_\ell$ is an
integer at least one, so each is one. The connected graph therefore has
two vertices and a single edge of intersection multiplicity one.
The two fibre multiplicities are equal, and
$2=c_1(X)\cdot F_q$ forces both to be one.

If instead $F_q=mD$ is irreducible, then $D^2=0$ and $c_1(X)\cdot D>0$.
Adjunction now gives $p_a(D)=0$ and $c_1(X)\cdot D=2$, so $m=1$.
This also rules out an irreducible singular fibre.

Two smooth curves with intersection number one meet at a unique
transverse point. In coordinates adapted to the components, a base
coordinate is $uvh(u,v)$ with $h$ a holomorphic unit. Replacing one
coordinate by its product with $h$ gives $f=uv$.
\end{proof}

Let $D\Subset\Sigma$ be an open disc whose closure contains at most one
singular value and whose boundary contains none. Write
\begin{equation}\label{eq:localU}
 U=f^{-1}(D),\qquad
 \delta=\#\{\text{singular values in }D\}\in\{0,1\}.
\end{equation}

\begin{lemma}\label{lem:localtop}
The open manifold $U$ is simply connected and has one end. Its
intersection form on compact integral two-cycles is
\begin{equation}\label{eq:localQ}
 \begin{cases}
 H_2(U;\Z)=\Z[F],\quad F^2=0,&\delta=0,\\[2pt]
 H_2(U;\Z)=\Z[E]\oplus\Z[C],\quad
 Q_U=\begin{pmatrix}-1&1\\1&-1\end{pmatrix},&\delta=1.
 \end{cases}
\end{equation}
In particular $Q_U$ is negative semidefinite with negative index
$\delta$, and $\alpha|_U=0$ in real cohomology.
If the total number of singular fibres is $k$, then
$b_2^+(X)=1$ and $b_2^-(X)=1+k$.
\end{lemma}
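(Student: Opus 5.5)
The plan is to treat $U$ as a fibration over a disc and read everything off from a deformation retraction onto the central fibre. Since $\overline D$ contains at most one singular value and $\partial D$ contains none, $f$ restricted to $f^{-1}(\overline D)$ is a proper holomorphic map. It is a submersion away from the (possible) node $p=E\cap C$. Choose the centre $0\in D$ to be the singular value if $\delta=1$, and any point if $\delta=0$. Lifting a radial contraction of $D$ by a connection on the submersive part, adjusted near $p$ using the local model $f=uv$ of Lemma~\ref{lem:fibres}, shows that $U$ deformation retracts onto $F_0=f^{-1}(0)$. This is the standard fact that a fibre has a neighbourhood retracting onto it; triangulability of the analytic pair gives it too. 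For $\delta=0$ we get $F_0\cong\PP^1$. For $\delta=1$ we get $F_0=E\cup C$, two spheres meeting in one point, which is homotopy equivalent to $S^2\vee S^2$. In both cases $U$ is simply connected, with $H_2(U;\Z)$ free of rank $1$ or $2$, generated by the fundamental classes of the components. The intersection numbers are $F^2=0$, and $E^2=C^2=-1$, $E\cdot C=1$ from \eqref{eq:twocurves}. This gives $Q_U$ as in \eqref{eq:localQ}. The matrix has eigenvalues $0$ and $-2$, so it is negative semidefinite with negative index $\delta$.

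For the end, note that $f^{-1}(\overline D\setminus D')$ for a smaller concentric disc $D'$ is a fibre bundle over an annulus with fibre $S^2$, since there are no singular values there. It is therefore connected, so $U$ has one end. For $\alpha|_U=0$, write $\alpha=f^*[\beta]$ and observe that $[\beta]|_D=0$ because $H^2(D;\R)=0$. Alternatively, $\alpha\cdot F=\alpha\cdot E=\alpha\cdot C=0$, since these curves are vertical, and $H^2(U;\R)\cong\operatorname{Hom}(H_2(U),\R)$ because $U$ is simply connected.

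For the global Betti numbers, I would compute $\chi(X)$ from the fibration. A singular fibre $E\cup C$ has Euler characteristic $3$ instead of $2$, so
\[
 \chi(X)=\chi(\PP^1)\chi(\Sigma)+k=4-4g+k .
\]
The fibres are rational and $H^{2,0}(X)=0$, so $p_g=0$ and $b_1(X)=b_1(\Sigma)=2g$. Then $b_2=\chi-2+2b_1=2+k$. Since $b_2^+=2p_g+1=1$ for a Kähler surface with $p_g=0$, we get $b_2^-=1+k$. A quick check: contracting one component of each singular fibre gives a ruled surface with $b_2=2$, and each of the $k$ blow-ups adds one to $b_2^-$.

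I expect the main obstacle to be making the retraction $U\simeq F_0$ rigorous near the node; everything else is bookkeeping. The simplest route is to cite the general result that a proper holomorphic map over a small disc retracts onto its central fibre. If a hands-on argument is preferred, use the Milnor-fibration picture for $uv$ near $p$, where $\{|uv|<\epsilon\}\cap B$ retracts onto $\{uv=0\}\cap B$.
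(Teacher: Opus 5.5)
Your argument is correct, but it takes a different route from the paper.

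\textbf{The paper's route.} The paper contracts one $(-1)$-component of each singular fibre and uses the local model $uv$ to see that $f$ descends to a proper submersion. This identifies $U$, up to oriented diffeomorphism, with $S^2\times D$ or $\Bl_p(S^2\times D)$. Everything is then read off topologically from this model: simple connectivity, the single end, and $Q_U$ (from $\operatorname{diag}(0,-1)$ in the basis $(F,E)$ with $C=F-E$). The global values $b_2^\pm$ come from counting an $S^2$-bundle over a closed surface plus $k$ blow-ups.

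\textbf{Your route.} You keep $f$ as it is. A retraction of $U$ onto the central fibre gives $\pi_1$ and $H_2$, and the pairings come straight from \eqref{eq:twocurves}. The global count comes from $\chi(X)=2\chi(\Sigma)+k$ together with $b_1=2g$ and $b_2^+=2p_g+1$.

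\textbf{Trade-offs.} Your approach avoids checking that the blown-down map is a submersion. In exchange you need the retraction near the node and Hodge theory for the global count. For the retraction, citing the standard retraction of a proper holomorphic family over a small disc onto its central fibre, combined with Ehresmann over the punctured disc, is enough. Note that $b_1(X)=2g$ does not follow from $p_g=0$, as your sentence suggests. It follows from the fibration itself, e.g.\ $\pi_1(X)\cong\pi_1(\Sigma)$ because the fibres are simply connected and none is multiple.

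\textbf{Later use and a small fix.} The paper's explicit model is what Lemma~\ref{lem:compactside} later quotes, in the form $U\setminus K\cong S^2\times(\R^2\setminus\overline{B_R(0)})$. Your annulus-bundle argument supplies the connected, non-relatively-compact exterior that is actually needed there. For that, use $f^{-1}(D\setminus\overline{D'})$ rather than $f^{-1}(\overline D\setminus D')$, and use properness of $f|_U$ to see that these sets are cofinal neighbourhoods of infinity in $U$.
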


\begin{proof}
Blow down one component of each singular fibre. These $(-1)$-curves
are pairwise disjoint, so the contractions can be performed independently.
The map $f$ descends through each contraction. The other component
becomes a smooth reduced rational fibre of square zero; the local model
$uv$ shows that the descended map is a submersion there. The resulting
proper submersion is therefore a smooth $S^2$-bundle over $\Sigma$.
Over the contractible disc $D$ it is trivial, and hence
\[
 U\cong S^2\times D\quad(\delta=0),\qquad
 U\cong\Bl_p(S^2\times D)\quad(\delta=1)
\]
as oriented smooth manifolds, where $p$ is an interior point.
These manifolds are simply connected and have one end; a blow-up changes
neither property because it replaces an interior ball by the disc bundle
of Euler number $-1$. In the blown-up case the basis $(F,E)$ has form
$\operatorname{diag}(0,-1)$; the change $C=F-E$ gives
\eqref{eq:localQ}. A sphere bundle over a closed oriented surface has
$b_2^+=b_2^-=1$, and each blow-up adds one negative direction.
Finally $\beta|_D$ is exact, so $f^*[\beta]|_U=0$.
\end{proof}

\section{Spherical quotient fillings}
\label{sec:filling}

All intersection forms in this section use the symplectic orientation.
For a compact oriented four-manifold $V$ with rational homology sphere
boundary, the natural map
\[
 \iota_V:H^2(V,\partial V;\R)\longrightarrow H^2(V;\R)
\]
is an isomorphism. For $c\in H^2(V;\R)$, its square means
\[
 c^2=\langle\iota_V^{-1}c\smile c,[V,\partial V]\rangle.
\]
For symplectic $V$, write $c_{1,\R}(TV)\in H^2(V;\R)$ for the real
image of its first Chern class $c_1(TV)\in H^2(V;\Z)$ and set
$\kappa_V=c_{1,\R}(TV)^2$. Integral curve classes are mapped to real
homology when used in intersection forms on real vector spaces.

A boundary is \emph{strongly convex} if a neighbourhood admits a primitive
$\lambda$ of the symplectic form whose Liouville vector field points
outward.  Contact identifications below preserve coorientation.
If $\Gamma\subset U(2)$ is finite and acts freely on $S^3$, denote by
$\xi_{\mathrm{std}}$ the quotient of the standard contact structure on
$S^3$.

\begin{theorem}\label{thm:obstruction}
Let $(M,\Omega)$ be a connected closed symplectic four-manifold with
\[
b_2^+(M)=1,\qquad b_2^-(M)=1+\delta,
\qquad \delta\in\{0,1\}.
\]
Suppose that $M$ contains an embedded symplectic sphere $F$ with $F^2=0$.
There is no connected compact symplectic domain $W\subset M\setminus F$
such that $H_1(W;\R)=0$ and its connected strongly convex boundary is
contactomorphic to $(S^3/\Gamma,\xi_{\mathrm{std}})$ for a nontrivial
finite subgroup $\Gamma\subset U(2)$ acting freely on $S^3$.
\end{theorem}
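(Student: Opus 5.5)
Suppose, for contradiction, that such a $W\subset M\setminus F$ exists, with boundary $Y=\partial W\cong(S^3/\Gamma,\xi_{\mathrm{std}})$ and $\Gamma\ne1$. We do not try to understand $W$ itself. Instead we replace it inside $M$ by the minimal resolution of the quotient singularity and read off a contradiction from Chern numbers.

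\textbf{Surgery.} The contact manifold $(S^3/\Gamma,\xi_{\mathrm{std}})$ is the link of the quotient singularity $\C^2/\Gamma$. The minimal resolution $\widetilde V$ is a strong filling, and its exceptional set is a tree of symplectic spheres $S_1,\dots,S_n$ with $S_i^2\le-2$ (cf.\ \cite[Section~2.2]{CP}, \cite[Section~3.A]{CNPP}). Since $\partial W$ is strongly convex and contactomorphic to $Y$, the complement $M\setminus\operatorname{int}W$ is a strong concave cap of $Y$. Gluing $\widetilde V$ to it, after rescaling the symplectic forms near the boundary, gives a closed symplectic manifold
\[
 \widetilde M=(M\setminus\operatorname{int}W)\cup_Y\widetilde V .
\]
This manifold still contains $F$ as a symplectic sphere of square zero, because $F\cap W=\emptyset$. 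By McDuff's theorem \cite[Section~4]{Markov} a symplectic sphere of square zero makes $\widetilde M$ rational or ruled. By \cite[Theorem~1.5]{LZnef} the class $[F]$ is realised by a ruling. Every symplectic sphere of negative square disjoint from a fibre is then contained in fibres of a ruled structure, after blowing down to a minimal model.

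\textbf{Counting.} Since $Y$ is a rational homology sphere, Mayer--Vietoris splits the intersection form additively:
\[
 b_2^\pm(M)=b_2^\pm(M\setminus W)+b_2^\pm(W),\qquad b_2^\pm(\widetilde M)=b_2^\pm(M\setminus W)+b_2^\pm(\widetilde V).
\]
Here $\widetilde V$ is negative definite of rank $n\ge1$, because $\Gamma\ne1$. The fibre $F$ lies in $M\setminus W$ and pairs to zero with $W$. Since $b_2^+(M)=1$ and $F^2=0$, the class $F$ supplies the isotropic direction of the positive-plus-null part. From this we expect $b_2^+(W)=0$, so $W$ is negative semidefinite. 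The hypothesis $H_1(W;\R)=0$ controls $b_1$, so Euler characteristics and signatures of $W$ and $\widetilde V$ are determined by $b_2^-$.

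For the rational/ruled manifold $\widetilde M$ we have $b_2^+=1$ and $c_1^2=8(1-g)-\#\text{blowups}$, while Noether-type identities tie $b_2^-$ to the number of blow-ups. The exceptional tree must sit in reducible fibres of a ruling retained from $F$. Each fibre chain containing $S_i$ with $S_i^2\le-2$ needs at least one additional $(-1)$-sphere, and this forces $b_2^-(\widetilde M)\ge n+1+(\text{extra})$. Combining this with $b_2^-(M)\le2$ and the additivity above bounds $n$, leaving only the cases $n=1$ with $S_1^2=-2$ or $-m$, and small chains. In these cases we compare
\[
 \kappa_{\widetilde V}=c_1(\widetilde V)^2\quad\text{(computed by adjunction on the tree)}
\]
with the value forced by the global identity $c_1^2(\widetilde M)=2\chi+3\sigma$, transplanted back through $M$. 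Here we use that $M$ itself has $c_1^2=2\chi(M)+3\sigma(M)$, which is valid because $M$ is also rational or ruled, for the same reason ($F$ is a square-zero sphere in $M$). Subtracting the two identities expresses $\kappa_W$ versus $\kappa_{\widetilde V}$ in terms of $\chi$ and $\sigma$ differences. For a nontrivial spherical quotient, $\kappa_{\widetilde V}+n$ is not compatible with the value allowed for $W$ given $b_2^-(W)\le\delta\le1$. Concretely, $\kappa_{\widetilde V}+\chi+\ldots$ is the Chern-number invariant of the resolution, and it is nonzero for $\Gamma\ne1$.

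\textbf{Main obstacle.} The hard step is the final Chern-number mismatch. We must show that the boundary-dependent invariant $c_1^2+\ldots$ of the minimal resolution is never equal to the value forced on a filling with negative index at most one. First we would enumerate the possible negative definite plumbings embedded in fibres of a ruled surface with at most two blow-ups. These are only $A_1$ (a $(-2)$-sphere) and a single $(-m)$-sphere. For each of these we would check directly that the relevant links ($L(2,1)$, $L(m,1)$) cannot admit a filling $W$ with the required $b_2$, using the explicit self-intersections.
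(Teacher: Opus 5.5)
Your overall plan matches the paper's: replace $W$ by the minimal-resolution domain, keep $F$, use the ruling in the class $[F]$, and compare Chern numbers. The two quantitative steps that are supposed to give the contradiction fail, however. \emph{The counting does not bound $n$.} By your own additivity, $b_2^-(\widetilde M)=b_2^-(M)-b+n=1+\delta-b+n$ with $b=b_2(W)$. So $\widetilde M$ is a ruled surface blown up $n+\delta-b$ times, not at most twice. Substituting this into $b_2^-(\widetilde M)\ge n+1+(\text{extra})$, the $n$ cancels. You only learn that the number $\ell$ of fibre components outside the exceptional tree satisfies $\ell\le\delta+1-b$, which is the paper's component budget and says nothing about $n$. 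Long trees really do occur. Blowing up a general point of the $(-1)$-curve in the fibre $C_1+2E+C_2$ (with $C_i^2=-2$) gives the fibre $C_1+2\widetilde E+C_2+2E'$, in which $C_1,\widetilde E,C_2$ form an $A_3$ chain with $\ell=1$. The fibres $(-1),(-2),\dots,(-2),(-1)$ contain $A_r$ for every $r$ with $\ell=2$. The reduction to ``$A_1$, a single $(-m)$-sphere and small chains'' is therefore unfounded, and the argument must handle all $r$ at once.

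\emph{The final check cannot be made on the links alone.} The link $(L(4,1),\xi_{\mathrm{std}})$ of $\tfrac14(1,1)$ has a rational homology ball filling, the Milnor fibre of the Wahl smoothing. So $b=0$ and $H_1(W;\R)=0$ are abstractly attainable, and correspondingly $\kappa_P+r-b=-1+1-0=0$ is compatible with $\kappa_W\le0$. Your claim that the Chern-number invariant is incompatible for every nontrivial $\Gamma$ is false without the fibre constraint. What is missing is the mechanism that couples the two, which the paper supplies in four steps:
\begin{enumerate}
\item[(i)] $H_2(W)$ embeds as a nondegenerate subspace of $[F]^\perp$. Hence $Q_W$ is negative definite, $b\le\delta$ and $\kappa_W\le0$; you only ``expect'' semidefiniteness.
\item[(ii)] $\kappa_W=\kappa_P+r-b$, from $c_1^2=2\chi+3\sigma$ together with the orthogonal splittings. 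This identity holds on any closed almost complex four-manifold, so you do not need $M$ rational or ruled.
\item[(iii)] The discrepancy formula $\kappa_P=-\sum_j d_j(a_j-2)$ with $0\le d_j<1$. The strict bound comes from the finite volume of the pulled-back pluricanonical density, and gives $\kappa_P>2r-S$ unless all $a_j=2$.
\item[(iv)] The weight bound $S\le3r+2\ell-4$. It follows from the Li--Zhang configuration theorem by repeatedly absorbing a $(-1)$-vertex into its at most two neighbours.
\end{enumerate}
Together these give $\kappa_W>3r-S-b\ge b+2-2\delta\ge0$. In the all-$(-2)$ case, $\kappa_W=r-b\le0$ forces $r=b=\delta=1$, and then (iv) gives $S\le1$, contradicting $S=2$.

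Finally, putting every $D_j$ into one member of class $[F]$ needs an $\widehat\Omega$-compatible $\widehat J$ that makes $F$ and all $D_j$ holomorphic simultaneously, followed by positivity of intersections. Your assertion that every negative sphere disjoint from a fibre lies in a fibre is not available for an arbitrary $J$.
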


We give the contact and resolution constructions before proving the
theorem.  This also fixes the orientation and Chern-class conventions in
the replacement argument.

\begin{lemma}\label{lem:resolutionfill}
Let $\Gamma\subset U(2)$ be nontrivial and act freely on $S^3$.
The minimal resolution of $\C^2/\Gamma$ contains a compact strongly
convex K\"ahler domain $P$ whose boundary is contactomorphic to
$(S^3/\Gamma,\xi_{\mathrm{std}})$.
The domain $P$ retracts onto the exceptional divisor
\[
\mathcal D=D_1\cup\cdots\cup D_r,\qquad
D_j\cong\PP^1,\qquad D_j^2=-a_j,\quad a_j\ge2,
\]
which is a nonempty connected normal-crossing tree with negative
definite intersection matrix.  Consequently
\[
b_1(P)=b_3(P)=0,\quad b_2(P)=r,\quad
\chi(P)=1+r,\quad \sigma(P)=-r.
\]
\end{lemma}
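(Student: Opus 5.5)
The plan is to realize $P$ as a sublevel set of a strictly plurisubharmonic exhaustion of the minimal resolution near infinity, and then read off the topology from the exceptional divisor.

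\emph{Construction of $P$.} Let $\pi:\widetilde Y\to Y=\C^2/\Gamma$ be the minimal resolution. Since $\Gamma$ acts freely on $S^3$, the only singular point of $Y$ is the origin, and $\pi$ is a biholomorphism off $\mathcal D=\pi^{-1}(0)$. The function $\phi=|z|^2$ is $\Gamma$-invariant and descends to $Y$. Off the origin it is strictly plurisubharmonic, with level sets the quotients of round spheres. Its Liouville field $\tfrac12 r\partial_r$ for $\tfrac{i}{2}\partial\bar\partial\phi$ is transverse and outward to $\{\phi=1\}$. The induced contact structure is the complex tangency of $S^3$, descended, which is $\xi_{\mathrm{std}}$.

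\emph{Global K\"ahler form.} The exceptional set is contractible onto $0$, so $\widetilde Y$ is K\"ahler. Grauert's criterion, or simply Artin's theorem that quotient singularities are rational with an explicit resolution, gives a K\"ahler form $\omega_1$ on $\widetilde Y$. The standard patching then produces a global form. Choose $\rho$ with $\rho=0$ near $\mathcal D$ and $\rho=1$ on $\{\phi\ge 1/4\}$. Set
\[
\Omega=\tfrac{i}{2}\partial\bar\partial\bigl(\rho\,\pi^*\phi\bigr)\cdot N+\omega_1' ,
\]
where $\omega_1'$ is a closed form equal to $\omega_1$ near $\mathcal D$ and exact, $d$ of a compactly supported $1$-form, away from it. To make this precise I will use $H^2(\widetilde Y\setminus\mathcal D)=H^2(S^3/\Gamma)=0$ over $\R$, together with $N\gg1$. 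The result is K\"ahler, and on $\{\pi^*\phi\ge 1/4\}$ it agrees with $N\,\tfrac i2\partial\bar\partial\phi+d\eta$, where $\eta$ has compact support. Taking $P=\{\pi^*\phi\le R\}$ with $R$ beyond the support of $\eta$, the boundary is strongly convex with the primitive $N\lambda_{\mathrm{std}}$, and its contact structure is $\xi_{\mathrm{std}}$. I expect this gluing to be the main technical obstacle. Alternatively I can cite the standard fact that the Milnor fibre/resolution gives a strong filling of the link.

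\emph{Structure of the exceptional divisor.} By Brieskorn's classification of quotient singularities, $\mathcal D$ is a normal crossing tree of smooth rational curves, and its intersection matrix is negative definite by Mumford/Grauert. Minimality excludes $(-1)$-curves. For a smooth rational curve $D_j$ with $D_j^2\ge0$ negativity already fails, so $a_j\ge2$. The divisor is nonempty because $Y$ is singular when $\Gamma\ne1$, connected by Zariski's main theorem since $Y$ is normal, and a tree because the link is a rational homology sphere (Mumford's plumbing computation).

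\emph{Topology.} $P$ deformation retracts onto $\mathcal D$ by flowing along $-\nabla\pi^*\phi$ away from a neighbourhood of $\mathcal D$. Such a neighbourhood retracts onto $\mathcal D$ as a plumbing. A tree of $r$ spheres has $b_0=1$, $b_1=b_3=0$ and $b_2=r$, giving $\chi=1+r$. The intersection form on $H_2(P)$ is the negative definite matrix $(D_i\cdot D_j)$. This form is nondegenerate because the boundary is a rational homology sphere, so $\sigma(P)=-r$.
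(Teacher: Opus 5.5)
\textbf{Verdict: your plan has a genuine gap in the gluing step; the rest is fine.} Your treatment of the exceptional divisor (nonempty, connected, a negative definite tree of rational curves with $a_j\ge2$) and of the topology of $P$ parallels the paper. The gap is the step you flag yourself: as written, the formula $\Omega=N\,\tfrac i2\partial\bar\partial(\rho\,\pi^*\phi)+\omega_1'$ does not give a K\"ahler form, for two reasons.

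\emph{First, positivity where $\rho$ varies.} In that shell you keep $\omega_1'=\omega_1$. For a general cut-off, $\tfrac i2\partial\bar\partial(\rho\,\pi^*\phi)$ is not semipositive there. Writing $\rho\phi=v(\phi)$, its eigenvalues are $v'$ and $(sv')'$. Since $v$ must rise with average slope greater than $1$ and then return to slope $1$, $(sv')'<0$ somewhere unless the transition is long and carefully chosen. Taking $N\gg1$ amplifies this negativity instead of curing it. You need $v(\pi^*\phi)$ plurisubharmonic (for instance $v$ convex and nondecreasing, so $v=\phi-c$ at infinity). You must then reserve $N\gg1$ for the separate shell where $\omega_1'\ne\omega_1$.

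\emph{Second, the type of $\omega_1'$.} This cannot be fixed with the tool you name. The form supplied by $H^2(\widetilde Y\setminus\mathcal D;\R)=0$ is $\omega_1'=\omega_1-d(\chi\theta)$ with $d\theta=\omega_1$. Its $(2,0)$-part $-\partial\chi\wedge\theta^{1,0}$ is nonzero in general, so $\Omega$ is at best a symplectic form taming the complex structure, not a K\"ahler form. To keep type $(1,1)$ you would need $\omega_1=i\partial\bar\partial\psi$ off $\mathcal D$. The obstruction to that is the Dolbeault class of $\theta^{0,1}$ in $H^{0,1}$ of the punctured neighbourhood, a nonvanishing group about which de Rham vanishing says nothing.

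\textbf{How the paper avoids this, and how to repair your route.} The paper uses no cut-off. It takes $f=\sum_\nu|\bar p_\nu\circ\pi|^2$, which, unlike $\pi^*|z|^2$ (in general only continuous along $\mathcal D$), is smooth and plurisubharmonic on the whole resolution. Then $dd^cf+\epsilon\eta$ is K\"ahler for every $\epsilon>0$, and strong convexity persists for small $\epsilon$. The price is two Gray isotopies, one in $\epsilon$ and one from $\sum_\nu|p_\nu|^2$ to $|z|^2$, to identify the contact structure.

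Your set-up can be repaired in the same spirit. With $v$ convex, nondecreasing and zero near $0$, the form $\omega_1+N\,i\partial\bar\partial v(\pi^*\phi)$ is K\"ahler. After division by $N$ it is a $C^\infty$-small perturbation of the flat form near the round level set $\{\pi^*\phi=R\}$. So a single Gray isotopy identifies the boundary with $(S^3/\Gamma,\xi_{\mathrm{std}})$.

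The exactly flat collar you were aiming for is available only at the cost of a taming, non-K\"ahler form. That would suffice for the later replacement argument, which uses compatibility only near $\mathcal D$, but it is not what the lemma asserts.

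\textbf{Two smaller points.}
\begin{itemize}
\item $\widetilde Y$ is K\"ahler because $\pi$ is projective, so $\widetilde Y$ embeds in $\C^N\times\PP^m$. It does not follow from $\mathcal D$ contracting, nor from Grauert's criterion.
\item The Milnor fibre is in general a different filling from the resolution, so it cannot be cited for this $P$.
\end{itemize}
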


\begin{proof}
Write $V=\C^2/\Gamma$, let $o$ be the image of the origin, and let
$\pi:Y\to V$ be the minimal resolution. Since the action is
free off the origin, $\pi$ is an isomorphism over $V\setminus\{o\}$.
The exceptional divisor $\mathcal D=\pi^{-1}(o)_{\mathrm{red}}$ is a
nonempty connected simple normal-crossing tree of smooth rational
curves with $D_j^2=-a_j\le-2$ \cite[Section~2.2]{CP}; its intersection
matrix is negative definite \cite[Lemma~(14.1)]{Lipman}.

\smallskip\noindent
\emph{The sublevel set and its topology.}
Choose positive-degree homogeneous generators $p_1,\ldots,p_N$ of
$\C[z_1,z_2]^\Gamma$. They give a closed embedding $V\hookrightarrow\C^N$.
Writing $\bar p_\nu$ for the descended functions, set
\[
 \rho=\sum_\nu|\bar p_\nu|^2,\qquad
 f=\rho\circ\pi=\sum_\nu|\bar p_\nu\circ\pi|^2,\qquad
 d^c=\tfrac{i}{2}(\bar\partial-\partial).
\]
The function $f$ is smooth and proper, $f^{-1}(0)=\mathcal D$, and
$dd^cf\ge0$, with strict positivity off $\mathcal D$. Indeed, the
quotient embedding is an immersion on $V\setminus\{o\}$, where $\pi$
is biholomorphic. On the covering space, the radial vector field $R$
satisfies
\[
 R\!\left(\sum_\nu|p_\nu|^2\right)
 =2\sum_\nu(\deg p_\nu)|p_\nu|^2>0.
\]
Hence $df\ne0$ on $\{f>0\}$. Fix $c>0$ and take the compact
smooth domain
\[
 P=P_c:=\{f\le c\},\qquad \partial P=\{f=c\},\qquad
 \mathcal D\Subset\operatorname{int}P.
\]

We construct the deformation inside this fixed domain $P$.
The resolution is projective over $V$
\cite[Corollary~(27.2)]{Lipman}, so there is a closed holomorphic embedding
\[
 J_0:Y\hookrightarrow V\times\PP^m\hookrightarrow\C^N\times\PP^m,
 \qquad
 \eta=J_0^*(\operatorname{pr}_1^*\omega_{\mathrm E}
              +\operatorname{pr}_2^*\omega_{\mathrm{FS}}).
\]
The form $\eta$ is K\"ahler. Use its metric $g_\eta$ for distances,
exponential maps and gradients in the following construction.
Use the smooth embedding
\[
 \jmath:\PP^m\hookrightarrow\operatorname{Herm}(m+1),\qquad
 \jmath([z])=\frac{zz^*}{z^*z},
\]
so that $(\mathrm{id}_{\C^N}\times\jmath)\circ J_0$ embeds $Y$ in a
real Euclidean space. The compact normal-crossing divisor $\mathcal D$
is locally contractible. Applying
\cite[Appendix, Theorem~A.7]{Hatcher} to its image and restricting
the resulting neighbourhood retraction to $Y$ gives an open
neighbourhood $U_0\subset Y$ and a continuous retraction
\[
 \mathfrak r:U_0\longrightarrow\mathcal D,\qquad
 \mathfrak r|_{\mathcal D}=\mathrm{id}_{\mathcal D}.
\]

Write $N_b(\mathcal D)=\{x\in Y:d_\eta(x,\mathcal D)<b\}$.
By compactness of $\mathcal D$, choose $b>0$ so that
\[
 \overline{N_{3b}(\mathcal D)}\subset\operatorname{int}P,\qquad
 \operatorname{inj}_{g_\eta}(x)>2b
 \quad(x\in N_b(\mathcal D)).
\]
Continuity of $\mathfrak r$ makes
\[
 U=\{x\in U_0\cap N_b(\mathcal D):
             d_\eta(x,\mathfrak r(x))<b\}
\]
an open neighbourhood of $\mathcal D$. The short-geodesic homotopy
\[
 K_t(x)=\exp_x\!\bigl(t\exp_x^{-1}(\mathfrak r(x))\bigr),
 \qquad (x,t)\in U\times[0,1],
\]
is continuous and satisfies
\[
 \begin{aligned}
 d_\eta(K_t(x),\mathcal D)
 &\le d_\eta(x,\mathcal D)+t\,d_\eta(x,\mathfrak r(x))<2b,\\
 K_0&=\mathrm{id}_U,\qquad K_1=\mathfrak r|_U,\qquad
 K_t|_{\mathcal D}=\mathrm{id}_{\mathcal D}.
 \end{aligned}
\]
Thus $K:U\times[0,1]\to\operatorname{int}P$ is a homotopy relative
to $\mathcal D$, with its entire image contained in $P$.

Since $P\setminus U$ is compact and disjoint from $f^{-1}(0)$,
choose $0<a<c$ with $P_a:=\{f\le a\}\subset U$. The vector field
$X=-\nabla f/|\nabla f|^2$ is smooth on $\{f>0\}$ and satisfies
$df(X)=-1$. Its flow $\Phi_s$ obeys
\[
 f(\Phi_s(x))=f(x)-s,
 \qquad 0\le s\le f(x)-a,\quad a\le f(x)\le c;
\]
existence throughout this interval follows because the trajectories
stay in the compact band $\{a\le f\le c\}$. Define
\[
 H_t(x)=
 \begin{cases}
 x,&f(x)\le a,\\
 \Phi_{t(f(x)-a)}(x),&f(x)>a,
 \end{cases}
 \qquad 0\le t\le1,
\]
where the two expressions agree continuously at $f=a$ since the
flow time tends to zero. Then $H_t(P)\subset P$, $H_t|_{P_a}=\mathrm{id}$,
and $H_1(P)=P_a$. Consequently
\[
 G_t(x)=
 \begin{cases}
 H_{2t}(x),&0\le t\le\tfrac12,\\
 K_{2t-1}(H_1(x)),&\tfrac12\le t\le1
 \end{cases}
\]
is a strong deformation retraction of $P$ onto $\mathcal D$: its
second branch is defined by $H_1(P)\subset U$, both branches stay
in $P$, and every $G_t$ fixes $\mathcal D$.
In particular, $P$ is connected. Mayer--Vietoris for the tree of
spheres \cite[Section~2.2]{Hatcher} gives
\[
 H_2(P;\Z)=\bigoplus_{j=1}^r\Z[D_j],\qquad
 H_1(P;\Z)=H_3(P;\Z)=H_4(P;\Z)=0.
\]
The intersection form on $H_2(P;\R)$ has matrix $(D_i\cdot D_j)$
in the real images of this basis. The asserted Betti numbers,
Euler characteristic and signature follow.

\smallskip\noindent
\emph{The K\"ahler form and strong convexity.}
With the K\"ahler form $\eta$ fixed above,
\begin{equation}\label{eq:resolutionkahler}
 \Omega_\epsilon=dd^cf+\epsilon\eta,\qquad \epsilon>0,
\end{equation}
is K\"ahler, with each $D_j$ holomorphic and symplectic.

The radial derivative above shows that
$\{\sum_\nu|p_\nu|^2=c\}$ meets each ray in $\C^2\setminus\{0\}$
once and transversely. Radial parametrization and $\pi$ therefore
identify $\partial P$ with $S^3/\Gamma$.
For $0<\delta<c$, the gradient flow gives a two-sided collar
\[
 A=\{c-\delta<f<c+\delta\}
 \cong(c-\delta,c+\delta)\times\partial P.
\]
The finite covering $S^3\to S^3/\Gamma$ and transfer
\cite[Section~3.G]{Hatcher} give $H^2(A;\R)=0$.
Choose a one-form $\theta$ on $A$ with $d\theta=\eta|_A$, and set
\[
 \lambda_\epsilon=d^cf+\epsilon\theta,\qquad
 d\lambda_\epsilon=\Omega_\epsilon|_A.
\]
At $\epsilon=0$, let $g_0$ be the K\"ahler metric of $dd^cf$ on $A$.
The Liouville vector field determined by
$\iota_{Z_0}dd^cf=d^cf$ is
\[
 Z_0=\tfrac12\nabla_{g_0}f,\qquad
 df(Z_0)=\tfrac12|\nabla_{g_0}f|^2>0
 \quad\text{on }\partial P.
\]
The solutions of $\iota_{Z_\epsilon}\Omega_\epsilon=\lambda_\epsilon$
depend smoothly on $\epsilon$. Compactness of $\partial P$ therefore
gives $\epsilon_*>0$ such that $df(Z_\epsilon)>0$ for
$0\le\epsilon\le\epsilon_*$ along $\partial P$. With
$\beta_\epsilon=\lambda_\epsilon|_{\partial P}$,
\[
 \bigl(\iota_{Z_\epsilon}\Omega_\epsilon^2\bigr)|_{\partial P}
 =2\beta_\epsilon\wedge d\beta_\epsilon>0
\]
in the boundary orientation. Fix $0<\epsilon\le\epsilon_*$.
Then $(P,\Omega_\epsilon)$ is strongly convex.

\smallskip\noindent
\emph{The standard contact boundary.}
Gray stability \cite[Theorem~10.5]{CdS} applied to
$\{\beta_\epsilon\}_{0\le\epsilon\le\epsilon_*}$ identifies the
boundary contact structure with $\ker(d^cf|_{\partial P})$.
To identify this structure, consider the invariant functions
\[
 F_s(z)=(1-s)|z|^2+s\sum_\nu|p_\nu(z)|^2,\qquad 0\le s\le1.
\]
Each $F_s$ is strictly plurisubharmonic off the origin and increases
strictly from zero to infinity along every ray. The implicit function
theorem gives smooth $\Gamma$-equivariant radial parametrizations
$\Psi_s:S^3\to\{F_s=c\}$. The contact forms
\[
 \alpha_s=\Psi_s^*(d^cF_s|_{\{F_s=c\}})
\]
descend to the fixed quotient $S^3/\Gamma$; this is the standard
strictly pseudoconvex boundary construction
\cite[Section~3.A]{CNPP}. At $s=0$ their kernel is
$\xi_{\mathrm{std}}$, and at $s=1$ the form is the pullback of
$d^cf|_{\partial P}$. A second application of Gray stability
\cite[Theorem~10.5]{CdS} identifies these endpoints. Both isotopies
pull back the final contact form to a positive multiple of the initial
one, so their composition gives the required coorientation-preserving
contactomorphism.
\end{proof}

The next lemma supplies the discrepancy inequality used in the filling
obstruction. Its strict form is essential when $W$ is a rational
homology ball.

\begin{lemma}\label{lem:discrepancies}
For the domain in Lemma~\ref{lem:resolutionfill}, let
$S=\sum_{j=1}^r a_j$.  There exist numbers $0\le d_j<1$ such that
\begin{equation}\label{eq:kappap}
\kappa_P=-\sum_{j=1}^r d_j(a_j-2).
\end{equation}
If some $a_j>2$, then $\kappa_P>2r-S$.  If all $a_j=2$, then
$\kappa_P=0$ and $S=2r$.
\end{lemma}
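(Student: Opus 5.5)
We express $c_1(TP)$ through the exceptional curves, using adjunction on the resolution and the log-terminality of quotient singularities.

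\emph{Step 1: the canonical class.} Since $H^2(P;\R)\cong H^2(P,\partial P;\R)$ is dual to the span of the $D_j$ and the intersection matrix is nondegenerate, there are unique real numbers $e_j$ with
\[
 K_P:=-c_{1,\R}(TP)=\sum_j e_j\,\PD[D_j]\quad\text{as classes pairing with }H_2.
\]
They are fixed by adjunction for the smooth rational curves $D_i\subset P$: $K_P\cdot D_i=-2-D_i^2=a_i-2$. So $(e_j)$ solves the linear system $\sum_j e_j(D_j\cdot D_i)=a_i-2$.

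\emph{Step 2: signs and bounds.} Set $d_j=-e_j$. Log-terminality of $\C^2/\Gamma$ gives $K_Y=\pi^*K_V+\sum_j e_jD_j$ with $\pi^*K_V$ numerically trivial over the exceptional locus, since $K_V$ is $\Q$-Cartier. Then $e_j$ is the discrepancy, and $e_j>-1$ by \cite[Section~2.2]{CP} (quotient singularities are klt). Hence $d_j<1$. Because the resolution is minimal, $K_Y$ is nef over $V$, as $K\cdot D_i=a_i-2\ge0$. The negativity lemma for a negative definite matrix then gives $e_j\le0$, so $d_j\ge0$. Concretely: write $e=e^+-e^-$ and pair with $e^+$. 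This yields $(e^+)^2\ge0$, and negative definiteness forces $e^+=0$.

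\emph{Step 3: the identity.} Compute
\[
 \kappa_P=K_P^2=\sum_j e_j\,(K_P\cdot D_j)=\sum_j e_j(a_j-2)=-\sum_j d_j(a_j-2),
\]
which is \eqref{eq:kappap}.

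\emph{Step 4: the two cases.} If all $a_j=2$, then \eqref{eq:kappap} gives $\kappa_P=0$ and $S=2r$ directly. Otherwise each term satisfies $d_j(a_j-2)\le a_j-2$, with strict inequality whenever $a_j>2$, because $d_j<1$. Summing gives $-\kappa_P<S-2r$, that is, $\kappa_P>2r-S$.

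\emph{Main obstacle.} The delicate point is the strict bound $d_j<1$, which must hold for every curve, including those with $a_j>2$. I would first try to quote the klt property of quotient singularities from \cite{CP}. If a self-contained argument is preferred, I would fall back on the explicit structure of the resolution. For cyclic quotients $\frac1n(1,q)$ one has $d_j=1-(\mu_j+\nu_j)/n$ with positive integers $\mu_j,\nu_j$ read off from the Hirzebruch--Jung chain. The noncyclic groups give star-shaped graphs with three cyclic arms, which can be handled similarly. The remaining steps are linear algebra.
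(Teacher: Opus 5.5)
Your proposal is correct. Its skeleton is exactly the paper's: solve $Qe=v$ by adjunction, evaluate the relative square as $\sum_j e_j(a_j-2)$, and write $\kappa_P-(2r-S)=\sum_j(1-d_j)(a_j-2)$. The differences lie in how the two bounds on $d_j$ are obtained.

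For $d_j\ge0$ you use the negativity-lemma splitting $e=e^+-e^-$. The paper instead shows that $(-Q)^{-1}$ is entrywise nonnegative via a Neumann series for $sI+Q$. Both arguments use only negative definiteness and nonnegative off-diagonal entries, so this difference is cosmetic.

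The substantive difference is the strict bound $d_j<1$. You identify $-d_j$ with the discrepancies of the minimal resolution and quote log terminality of quotient singularities. The paper proves the bound from scratch. It builds the global meromorphic section $\tau=(dF_1\wedge dF_2)^{\otimes m}/(G\circ\pi)$ of $K_{\widetilde V}^{\otimes m}$ from an invariant Jacobian and shows $\operatorname{div}(\tau)=-m\sum_j d_jD_j$. It then uses that the density $|\tau|^{2/m}$ has finite mass near $\mathcal D$ (the mass is $\Vol(B_R(0))/m$) to force $n_j/m>-1$. This is the analytic integrability criterion for klt. Your route is shorter and conceptually cleaner; the paper's is self-contained and needs no MMP vocabulary.

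Since $d_j<1$ is the only genuinely nontrivial input of the lemma, you should cite a source that actually proves it, such as Koll\'ar--Mori's result that quotient singularities are klt. This bound is what keeps the final contradiction strict when $W$ is a rational homology ball. The paper invokes \cite[Section~2.2]{CP} only for the shape of the resolution graph, and that section need not contain the discrepancy bound.

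Your Hirzebruch--Jung fallback is sound for cyclic $\Gamma$. For the star-shaped graphs, however, ``handled similarly'' would still require an explicit computation at the trivalent vertex.
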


\begin{proof}
Write $\pi:\widetilde V\to V=\C^2/\Gamma$ for the resolution,
$q:\C^2\to V$ for the quotient map, and $N=\partial P$; set $o=q(0)$.
Let $Q=(D_i\cdot D_j)$, $A=-Q$ and $v=(a_j-2)_{j=1}^r$.
By Lemma~\ref{lem:resolutionfill}, $A$ is positive definite with
nonpositive off-diagonal entries, and $v\ge0$.

\smallskip\noindent
\emph{Nonnegative coefficients.}
Define $d=A^{-1}v$. For $s>\lambda_{\max}(A)$, the matrix $B=sI-A$
is entrywise nonnegative and $\|B/s\|_2<1$. Hence
\[
 A^{-1}=\frac1s\sum_{k=0}^\infty(B/s)^k
\]
is entrywise nonnegative, so $d\ge0$.
The tangent--normal exact sequence and
$N_{D_j/\widetilde V}\cong\mathcal O_{\widetilde V}(D_j)|_{D_j}$
(see \cite[Tag~0B3P]{Stacks}) give the adjunction identities
\[
 \langle c_1(TP),[D_j]\rangle=2-a_j=-v_j,
 \qquad \deg(K_{\widetilde V}|_{D_j})=v_j.
\]

\smallskip\noindent
\emph{The strict bound.}
Set $m=|\Gamma|$. Since $(\det\gamma)^m=1$ for $\gamma\in\Gamma$,
the tensor $\sigma_0=(dz_1\wedge dz_2)^{\otimes m}$ descends to a
nowhere-vanishing pluricanonical section $\sigma$ on
$V^*=V\setminus\{o\}$. We first exhibit its pullback as a meromorphic
section on the entire resolution.

Among the invariant generators used in Lemma~\ref{lem:resolutionfill},
choose $p_1,p_2$ whose differentials are independent at some nonzero
point; such a pair exists because the generators embed the smooth
quotient. Their Jacobian
$\Delta=\det\bigl(\partial(p_1,p_2)/\partial(z_1,z_2)\bigr)$ is not
identically zero and satisfies
$\Delta(\gamma z)\det\gamma=\Delta(z)$.
Thus $\Delta^m$ descends to a regular function $G$ on $V$.
Writing $\bar p_i$ for the descended generators and
$F_i=\bar p_i\circ\pi$, define
\[
 \tau=\frac{(dF_1\wedge dF_2)^{\otimes m}}{G\circ\pi}.
\]
This is a global meromorphic section of $K_{\widetilde V}^{\otimes m}$.
The quotient tensor $(d\bar p_1\wedge d\bar p_2)^{\otimes m}/G$
pulls back by $q$ to $\sigma_0$ on $\{\Delta\ne0\}$.
The meromorphic identity theorem therefore yields
$\tau|_{\widetilde V\setminus\mathcal D}=\pi^*\sigma$.
Consequently $\operatorname{div}(\tau)=\sum_j n_jD_j$ for integers
$n_j$, and the divisor--line-bundle correspondence
\cite[Tag~02SE]{Stacks} gives
\[
 K_{\widetilde V}^{\otimes m}
 \cong\mathcal O_{\widetilde V}\Bigl(\sum_j n_jD_j\Bigr).
\]
Restricting this line-bundle isomorphism to each $D_i$ and taking
degrees yields $Qn=mv$. Since $Ad=v$ and $Q=-A$, we obtain
$n=-md$.

For a local pluricanonical tensor
$h(dw_1\wedge dw_2)^{\otimes m}$, use the intrinsic density
$\nu=|h|^{2/m}dV_w$, as in \cite[Definition~3.2]{FG}, normalized so
that $\nu_{\sigma_0}$ is Euclidean volume. For $R>0$, let $U=q(B_R(0))$.
The covering $B_R(0)\setminus\{0\}\to U\setminus\{o\}$ has degree $m$,
while $\pi$ is biholomorphic off $\mathcal D$. Change of variables
therefore gives
\[
 \int_{\pi^{-1}(U)\setminus\mathcal D}\nu_\tau
 =\int_{U\setminus\{o\}}\nu_\sigma
 =\frac{\Vol(B_R(0))}{m}<\infty.
\]
Near a general point of $D_j$, take coordinates $(u,w)$ with
$D_j=\{u=0\}$. Then
$\tau=u^{n_j}h(u,w)(du\wedge dw)^{\otimes m}$ with $h$ a holomorphic
unit. On a sufficiently small bidisc, $|h|$ is bounded above and
below by positive constants, so finite mass forces
\[
 \int_0^\varepsilon r^{2n_j/m+1}\,dr<\infty,
 \qquad\text{hence}\qquad \frac{n_j}{m}>-1.
\]
At equality the integral diverges logarithmically. Thus
$d_j=-n_j/m<1$, proving $0\le d_j<1$.

\smallskip\noindent
\emph{The relative Chern square.}
Since $N\cong S^3/\Gamma$, we have $H^1(N;\R)=H^2(N;\R)=0$.
The exact sequence of the pair and Poincar\'e--Lefschetz duality
\cite[Sections~3.1--3.3]{Hatcher} give an isomorphism
\[
 \iota:H^2(P,N;\R)\longrightarrow H^2(P;\R)
\]
and the basis $\Theta_j=\PD[D_j]\in H^2(P,N;\R)$.
Put $c=c_{1,\R}(TP)\in H^2(P;\R)$ and $\widehat c=\iota^{-1}c$.
Since $\langle\iota\Theta_j,[D_i]\rangle=Q_{ij}$ and $Qd=-v$,
the class $\iota(\sum_jd_j\Theta_j)$ and $c$ have the same
evaluations on the homology basis $[D_i]$.
Thus $\widehat c=\sum_jd_j\Theta_j$, and the mixed cup product gives
\[
 \kappa_P
 =\langle\widehat c\smile c,[P,N]\rangle
 =d^TQd=-d^Tv
 =-\sum_{j=1}^r d_j(a_j-2).
\]
Finally,
\[
 \kappa_P-(2r-S)=\sum_{j=1}^r(1-d_j)(a_j-2).
\]
Every summand is nonnegative, and one is positive if some $a_j>2$.
If all $a_j=2$, then $v=d=0$, so $\kappa_P=0$ and $S=2r$.
\end{proof}

\begin{proof}[Proof of Theorem~\ref{thm:obstruction}]
Suppose that such a domain $W$ exists. Put
$Y=\partial W$, $C=M\setminus\operatorname{int}W$ and $b=b_2(W)$.

\smallskip\noindent
\emph{The topology of the filling.}
Let $\iota_W:W\hookrightarrow M$ and $\iota_C:C\hookrightarrow M$
be the inclusions. Since $Y$ is a rational homology sphere,
$H_1(Y;\R)=H_2(Y;\R)=0$, and Mayer--Vietoris
\cite[Chapters~2--3]{Hatcher} gives
\[
 0=H_2(Y;\R)\longrightarrow H_2(W;\R)\oplus H_2(C;\R)
 \xrightarrow{(\iota_W)_*+(\iota_C)_*}H_2(M;\R)
 \longrightarrow H_1(Y;\R)=0.
\]
In particular, $(\iota_W)_*$ is injective.
The exact sequence of $(W,Y)$, Poincar\'e--Lefschetz duality
and the universal coefficient theorem give
\[
 H_2(W;\R)\xrightarrow{\;\cong\;}H_2(W,Y;\R)
 \xrightarrow{\;\cong\;}H^2(W;\R)
 \xrightarrow{\;\cong\;}H_2(W;\R)^*.
\]
This composite sends $x$ to $Q_W(x,\cdot)$, where $Q_W$ is the
intersection form of $W$; hence $Q_W$ is nondegenerate.
Representing cycles in the interiors of the two pieces gives,
for $x,y\in H_2(W;\R)$ and $z\in H_2(C;\R)$,
\[
 \begin{aligned}
 Q_M((\iota_W)_*x,(\iota_W)_*y)&=Q_W(x,y),\\
 Q_M((\iota_W)_*x,(\iota_C)_*z)&=0,
 \end{aligned}
\]
where $Q_M$ is the intersection form of $M$.
Thus $(\iota_W)_*H_2(W;\R)$ is a nondegenerate subspace of $H_2(M;\R)$
contained in $[F]^\perp$, since $F\subset C$.
The class $[F]$ is nonzero because $F$ has positive
symplectic area. The signature of $M$ and $F^2=0$ imply that
$[F]^\perp$ is negative semidefinite, with radical $\R[F]$ and
negative index $\delta$. Thus the intersection form of $W$ is
negative definite, with rank zero allowed, and
\begin{equation}\label{eq:fillingnegative}
 0\le b\le\delta\le1,\qquad \kappa_W\le0.
\end{equation}
In the exact sequence of $(W,Y)$, the boundary map
$H_4(W,Y;\R)\to H_3(Y;\R)$ is an isomorphism on fundamental classes,
whereas $H_3(W,Y;\R)\cong H^1(W;\R)=0$.
Hence $H_3(W;\R)=0$. Also $H_4(W;\R)=0$, since $W$ is connected with
nonempty boundary. Consequently
\begin{equation}\label{eq:fillingeulersignature}
 \chi(W)=1+b,\qquad \sigma(W)=-b.
\end{equation}

\smallskip\noindent
\emph{Symplectic replacement.}
Let $(P,\Omega_P)$ be the domain from
Lemma~\ref{lem:resolutionfill}. Choose a coorientation-preserving
contactomorphism $\varphi:\partial P\to Y$ and boundary contact
forms $\alpha_P,\alpha_W$ induced by Liouville primitives. Then
$\varphi^*\alpha_W=h\alpha_P$ for a smooth positive function $h$.
The collar homotopy formula extends $\alpha_W$ to a primitive of
$\Omega$ on both sides of $Y$. Its Liouville field has the same
outward normal component along $Y$. The Liouville flow then gives
the collar model $d(e^t\alpha)$, with $t$ increasing outward.

Rescale $\Omega_P$ by $0<a<e^{-1}\min h$.
Choose a smooth nondecreasing function $\chi:[0,1]\to[0,1]$
equal to zero near $0$ and to one near $1$, and set
\[
 k(s,y)=a\exp\!\left(s+\chi(s)\bigl(\log(h(y)/a)-1\bigr)\right).
\]
It equals $ae^s$ near $0$ and $h(y)e^{s-1}$ near $1$, and
$\partial_s\log k=1+\chi'(s)(\log(h/a)-1)>0$.
Therefore
\begin{equation}\label{eq:contactcylinder}
 \Omega_{\mathrm{col}}=d(k\alpha_P),\qquad
 \Omega_{\mathrm{col}}^2
 =2k\partial_s k\,ds\wedge\alpha_P\wedge d\alpha_P>0.
\end{equation}
This joins the full collar forms on $[0,1]\times\partial P$,
using $t=s$ at the $P$ end and $t=s-1$ at the $C$ end.
We obtain a closed symplectic manifold
$\widehat M=C\cup_Y P$, with the inserted collar understood.
The symplectic form agrees with $\Omega$ on $C$ and with
$a\Omega_P$ near $\mathcal D$. Thus the actual sphere $F$ is
preserved, and the resolution structure remains compatible
near $\mathcal D$.

\smallskip\noindent
\emph{Compatible almost complex structures.}
Denote the glued form by $\widehat\Omega$.
For a symplectic vector bundle $(E,\omega)$ with a smooth positive
definite metric $g$, set
\[
 g(Au,v)=\omega(u,v),\qquad
 B=(-A^2)^{1/2},\qquad J=B^{-1}A.
\]
Since $A$ is invertible and $g$-skew-adjoint, $-A^2=A^*A$ is
positive definite, and its positive square root $B$ is smooth and
commutes with $A$. Thus $J^2=-I$ and
\[
 \omega(Ju,Jv)=\omega(u,v),\qquad
 \omega(u,Jv)=g(u,Bv),
\]
so $J$ is compatible.

Choose neighbourhoods
\[
 F\subset V_F\Subset U_F,\qquad
 \mathcal D\subset V_D\Subset U_D,\qquad U_F\cap U_D=\varnothing.
\]
On $U_D$ take the resolution structure $J_D$.
Along $F$, choose a compatible direct-sum structure $j_F$ on
$TF\oplus(TF)^{\widehat\Omega}$.
Extend $\widehat\Omega(\,\cdot\,,j_F\,\cdot\,)$ to a positive
definite metric on $U_F$ and apply the formula above to obtain a
compatible $J_F$ with $J_F|_F=j_F$.
For $\alpha\in\{F,D\}$, set
\[
 g_\alpha=\widehat\Omega(\,\cdot\,,J_\alpha\,\cdot\,)
 \quad\text{on }U_\alpha.
\]
Choose an arbitrary global positive definite metric $g_*$ and
cutoffs
\[
 \rho_\alpha\in C_c^\infty(U_\alpha),\qquad
 0\le\rho_\alpha\le1,\qquad \rho_\alpha|_{V_\alpha}=1,
\]
and define
\[
 g=(1-\rho_F-\rho_D)g_*+\rho_Fg_F+\rho_Dg_D.
\]
The local terms are extended by zero. Since $\rho_F\rho_D=0$,
the coefficients are nonnegative and sum to one, so $g$ is a
smooth positive definite metric. Define
\[
 g(\widehat A u,v)=\widehat\Omega(u,v),\qquad
 \widehat J=(-\widehat A^2)^{-1/2}\widehat A.
\]
This structure is $\widehat\Omega$-compatible, and on each
$V_\alpha$,
\[
 g=g_\alpha,\qquad \widehat A=J_\alpha,\qquad
 (-\widehat A^2)^{1/2}=I,\qquad \widehat J=J_\alpha.
\]
Hence $F$ and all $D_j$ are $\widehat J$-holomorphic.

For any two compatible structures $J_0,J_1$, applying the same
formula to
\[
 g_t=(1-t)\widehat\Omega(\,\cdot\,,J_0\,\cdot\,)
       +t\widehat\Omega(\,\cdot\,,J_1\,\cdot\,),
 \qquad 0\le t\le1,
\]
gives a homotopy from $J_0$ to $J_1$, fixed wherever they agree.
Thus the compatible first Chern class is independent of the choice.

\smallskip\noindent
\emph{Chern numbers.}
The vanishing of $H^1(Y;\R)$ and $H^2(Y;\R)$ gives orthogonal decompositions
\[
 \begin{aligned}
 H^2(M;\R)&\cong H^2(C;\R)\oplus H^2(W;\R),\\
 H^2(\widehat M;\R)&\cong H^2(C;\R)\oplus H^2(P;\R).
 \end{aligned}
\]
Indeed, each class on a piece has a unique relative lift represented
by a closed form vanishing near $Y$; zero extensions from opposite
pieces have zero cup product. Thus
\begin{equation}\label{eq:replacementsignature}
 b_2^+(\widehat M)=1,\qquad
 b_2^-(\widehat M)=1+\delta+r-b,
\end{equation}
and $\chi(\widehat M)-\chi(M)=r-b$,
$\sigma(\widehat M)-\sigma(M)=-r+b$.
The closed almost complex identity $c_1^2=2\chi+3\sigma$
\cite[Lecture~12, Corollary~1]{Auroux} yields
\[
 c_1(\widehat M)^2-c_1(M)^2=-(r-b).
\]
Both symplectic forms restrict to $\Omega|_C$ on the common exterior.
The compatible structures induced there are homotopic by the metric
construction above, so their first Chern classes coincide.
The orthogonal decompositions therefore give
$c_1(M)^2=\kappa_C+\kappa_W$ and
$c_1(\widehat M)^2=\kappa_C+\kappa_P$.
Comparing these expressions proves
\begin{equation}\label{eq:chernreplacement}
 \kappa_W=\kappa_P+r-b.
\end{equation}

\smallskip\noindent
\emph{A genus-zero member containing the exceptional divisor.}
For the structure $\widehat J$ fixed above, write
$e=[F]\in H_2(\widehat M;\Z)$ and
$c=c_1(T\widehat M,\widehat J)\in H^2(\widehat M;\Z)$.
Positivity of intersections \cite[Theorem~2.123]{WendlLectures}
and $F^2=0$ show that $e$ is $\widehat J$-nef.
Adjunction gives $c\cdot e=2$, so
$g_{\widehat J}(e)=1+(e^2-c\cdot e)/2=0$.

Apply the embedded-sphere case of
\cite[Theorem~4.1.4]{Markov} to $F$: every point of $\widehat M$
lies on a $\widehat J$-holomorphic subvariety of total class $e$.
By \cite[Theorem~1.5]{LZnef}, these subvarieties are connected
trees of embedded rational curves; its hypotheses hold because
$e$ is nef of genus zero and $\widehat J$ is tamed by the symplectic
form.

Choose such a member through a point of $D_1$.
If $D_1$ were not a component, positivity of intersections would
give $e\cdot[D_1]>0$, contrary to $F\cap D_1=\varnothing$.
The same argument propagates along the connected divisor:
a component adjacent to one already in the member must also
belong to it. Hence the member has the form
\[
 \Theta=\sum_{i=1}^n m_iA_i,\qquad m_i\in\Z_{>0},\qquad
 A_j=D_j\ (1\le j\le r),\qquad n=r+\ell.
\]
Negative definiteness of the exceptional matrix and
$[\Theta]^2=e^2=0$ imply $\ell\ge1$.
Moreover, $\Theta$ cannot contain $F$, since its total symplectic
area already equals the area of $F$ and the other components have
positive area. Therefore all $F\cdot A_i\ge0$, and
$0=e^2=\sum_i m_i(F\cdot A_i)$ forces $F\cap A_i=\varnothing$
for every $i$. In particular,
\[
 0=e\cdot[A_i]=m_iA_i^2+\sum_{j\ne i}m_j(A_j\cdot A_i).
\]
Connectedness and $n\ge2$ make the last sum positive, so every
$A_i^2<0$.

\smallskip\noindent
\emph{The number of additional components.}
Let $Q_\Theta=(A_i\cdot A_j)$. The preceding identities give
$Q_\Theta m=0$, and for $x\in\R^n$,
\begin{equation}\label{eq:weightedlaplacian}
 x^TQ_\Theta x
 =-\sum_{i<j}(A_i\cdot A_j)m_im_j
 \left(\frac{x_i}{m_i}-\frac{x_j}{m_j}\right)^2.
\end{equation}
Since the graph is connected, this form has kernel $\R m$ and
negative index $n-1$. All component classes lie in $e^\perp$.
By~\eqref{eq:replacementsignature},
$e^\perp/\R e$ is negative definite of dimension $\delta+r-b$.
The map $x\mapsto\sum_i x_i[A_i]$, followed by projection to this
quotient, is injective on any negative definite subspace for
$Q_\Theta$: a vector mapping into $\R e$ has square zero.
Comparing dimensions yields
\begin{equation}\label{eq:componentbudget}
 n-1\le\delta+r-b,\qquad 1\le\ell\le\delta+1-b.
\end{equation}

\smallskip\noindent
\emph{The total weight of the member.}
We use the configuration theorem
\cite[Definitions~3.4--3.6 and~4.2, Proposition~4.6]{LZnef}.
In the form needed here, a connected configuration with at least
two vertices, labelled by effective integral classes $u_i$ with
positive integer multiplicities,
$\operatorname{adj}(u_i):=u_i^2-c\cdot u_i\ge-2$, and positive
nonzero edge pairings is a tree if its total class is nef of
genus zero. Every edge then has intersection number one.
The vertices may be effective classes represented by reducible
subvarieties.

Start with $u_i=[A_i]$, retaining the multiplicities $m_i$.
Then $\operatorname{adj}(u_i)=-2$,
$\sum_i m_i u_i=e$ and $e\cdot u_i=0$.
We reduce this configuration while preserving these properties.
Whenever at least two vertices remain, connectedness and
$e\cdot u_i=0$ imply $u_i^2<0$, whereas
\[
 2=c\cdot e=\sum_i m_i(2+u_i^2)
\]
forces a vertex $v$ with $v^2=-1$.
If its neighbours are $u_1,\ldots,u_k$, then
$e\cdot v=0$ gives $m_v=\sum_{i=1}^k m_i$.
Delete $v$ and replace each neighbour by $u_i+v$, keeping its
multiplicity. The new classes are effective, their weighted sum
is still $e$, and they remain orthogonal to $e$. The tree
pairings give
\[
 \operatorname{adj}(u_i+v)=-2,\qquad
 (u_i+v)^2=u_i^2+1,\qquad
 (u_i+v)\cdot(u_j+v)=1\quad(i\ne j).
\]
All other pairings are unchanged. The new configuration is
connected and satisfies the same effectivity and numerical
conditions. If $k\ge3$, it has at least three vertices, and the
modified neighbours form a triangle, contrary to the configuration
theorem. Hence $k=1$ or $2$, and the reduction can continue.

Each step deletes one vertex and decreases the unweighted sum
of negative self-intersections by $1+k\le3$.
After $n-1$ steps a single class $u$ remains with $e=mu$,
so $u^2=0$. The last step has $k=1$ and decreases the sum by $2$.
It follows that
\begin{equation}\label{eq:weightboundall}
 S_\Theta:=\sum_{i=1}^n(-A_i^2)\le3(n-2)+2=3n-4.
\end{equation}
Each of the $\ell$ additional components has negative integral
square and contributes at least one. Thus, with
$S=\sum_{j=1}^r a_j$, equations~\eqref{eq:componentbudget}
and~\eqref{eq:weightboundall} imply
\begin{equation}\label{eq:resolutionweightbound}
 S\le S_\Theta-\ell\le3r+2\ell-4
 \le3r+2\delta-2b-2.
\end{equation}

\smallskip\noindent
\emph{The numerical contradiction.}
If some $a_j>2$, Lemma~\ref{lem:discrepancies},
\eqref{eq:chernreplacement} and~\eqref{eq:resolutionweightbound}
give
\[
 \kappa_W=\kappa_P+r-b
 >3r-S-b
 \ge b+2-2\delta
 \ge0,
\]
contradicting~\eqref{eq:fillingnegative}.
The inequality is strict also when $b=0$ and $\delta=1$.
If all $a_j=2$, then $\kappa_P=0$ and $S=2r$ by
Lemma~\ref{lem:discrepancies}. Thus
$0\ge\kappa_W=r-b$ and $1\le r\le b\le\delta\le1$,
forcing $r=b=\delta=1$.
But~\eqref{eq:resolutionweightbound} gives $S\le1$, whereas
$S=2$. This final contradiction proves the theorem.
\end{proof}

\begin{proposition}\label{prop:filling}
Let $(X,\omega)$ be a connected compact K\"ahler surface and
$f:X\to\Sigma$ a holomorphic map with connected fibres onto a smooth
compact curve.  Suppose that the generic fibre is $\PP^1$ and every
singular fibre is a reduced union of two $(-1)$-curves meeting
transversely at one point.  Let $D\Subset\Sigma$ be an open disc whose
closure contains $\delta\in\{0,1\}$ singular values and whose boundary
contains none, and put $U=f^{-1}(D)$.
There is no connected compact symplectic domain $W\Subset U$ with
connected strongly convex boundary contactomorphic to
$(S^3/\Gamma,\xi_{\mathrm{std}})$ for a nontrivial finite subgroup
$\Gamma\subset U(2)$ acting freely on $S^3$.
\end{proposition}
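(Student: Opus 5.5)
The plan is to reduce Proposition~\ref{prop:filling} to Theorem~\ref{thm:obstruction} by compactifying $U$ into a closed symplectic four-manifold that contains both $W$ and a square-zero symplectic sphere disjoint from $W$.

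Suppose that such a $W\Subset U$ exists. Since $W$ is compact, $f(W)$ is a compact subset of $D$. Shrink $D$ to a smaller closed disc $D'\subset D$ containing $f(W)$ in its interior and still containing the singular value, if there is one. Choose a point $q\in D\setminus D'$ that is a regular value. Then $F=f^{-1}(q)$ is a smooth rational fibre with $F^2=0$, and it is disjoint from $W$.

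\textbf{The closed manifold.} Take $M=X$ with a suitable symplectic form. Pass to the fibration obtained by blowing down one component of every singular fibre away from $D$; this is the blow-down used in Lemma~\ref{lem:localtop}. This gives a closed K\"ahler surface $M$ containing $U$ unchanged, with
\[
 b_2^+(M)=1,\qquad b_2^-(M)=1+\delta.
\]
The blow-downs happen outside $\overline U$, and the Betti number count is exactly as in the proof of Lemma~\ref{lem:localtop}.

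The issue is that the K\"ahler form on $M$ must restrict to $\omega$ on a neighbourhood of $W$, so that $W$ stays a strongly convex symplectic domain.

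\textbf{Gluing the forms.} Let $\omega_M$ be any K\"ahler form on $M$. On $U$, Lemma~\ref{lem:localtop} says $Q_U$ has the stated form and $U$ is simply connected. The classes $[\omega|_U]$ and $[\omega_M|_U]$ are determined by their fibre areas and, when $\delta=1$, by the areas of $E$ and $C$.

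To match them, replace $\omega_M$ by
\[
 \lambda\,\omega_M+\mu\, f^*\beta',
\]
where $\beta'$ is a large area form on $\Sigma$ supported away from $D'$. This fixes the class only up to the extra parameter; a cleaner route is the following.

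Interpolate directly on $f^{-1}(D\setminus D')$. Write $\omega-\omega_M=d\eta$ on $U$ once the classes agree. Then set
\[
 \omega'=\omega_M+d(\chi\eta)+K\,f^*(\psi\,\mathrm{vol}_\Sigma),
\]
with $\chi$ a cutoff equal to $1$ near $f^{-1}(D')$. Here $\psi$ is supported in the annulus where $d\chi\neq0$, and $K\gg0$ absorbs the negativity. This is the standard Thurston-type trick, which works because the error terms are fibrewise controlled. For large $K$ this form is symplectic, equals $\omega$ near $W$, and makes the fibres symplectic, in particular $F$.

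\textbf{Matching the classes.} When $\delta=0$, $H^2(U;\R)=\R$ is detected by fibre area, so rescaling $\omega_M$ suffices. When $\delta=1$, we also need to match the ratio of the areas of $E$ and $C$. Since $M$ is a blow-up of a ruled surface, its K\"ahler cone allows these two areas to be prescribed arbitrarily, subject only to positivity, by choosing the exceptional-curve size; this gives the matching.

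\textbf{Applying the obstruction.} Also $H_1(W;\R)=0$. This is part of the hypotheses of Theorem~\ref{thm:obstruction}, so I must derive it. From Mayer--Vietoris for $U=W\cup(U\setminus\operatorname{int}W)$, glued along a rational homology sphere, we get
\[
 H_1(W;\R)\hookrightarrow H_1(U;\R)=0,
\]
using Lemma~\ref{lem:localtop}. Theorem~\ref{thm:obstruction} then gives the contradiction.

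The main obstacle is the gluing step: constructing a closed symplectic $M$ that agrees with the given $\omega$ near $W$ and keeps $F$ symplectic. I would first try the cohomology matching via the Thurston construction with a large multiple of the pulled-back base form. Convexity is local near $\partial W$, so it is preserved.
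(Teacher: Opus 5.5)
Your proposal is correct and agrees with the paper's proof at both ends: the Mayer--Vietoris derivation of $H_1(W;\R)=0$ from Lemma~\ref{lem:localtop}, and the final appeal to Theorem~\ref{thm:obstruction}. The difference is in how you build the closed symplectic manifold.

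The paper blows down one $(-1)$-sphere in each singular fibre outside $\overline U$ \emph{symplectically}. That operation is supported in an arbitrarily small neighbourhood of the sphere, so $\omega$ is left literally unchanged on $\overline U$ and near a retained smooth fibre $F$. No cohomology matching or interpolation ever arises.

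You instead contract holomorphically, choose a fresh K\"ahler form $\omega_M$, and glue it to $\omega$ across an annulus of regular fibres. This works, but it costs two steps that you left informal.

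\emph{The interpolation.} The cutoff must be pulled back from the base, $\chi=\chi_0\circ f$, and you need $\psi>0$ on $\supp d\chi_0$, not merely $\supp\psi$ in the annulus. Put $\omega_\chi=(1-\chi)\omega_M+\chi\omega$; it is K\"ahler on $U$ because both forms are K\"ahler for the same complex structure there. Since $d\chi\wedge d\chi=0$ and $d\chi\wedge f^*\mathrm{vol}_\Sigma=0$, you get $\omega'^2=\omega_\chi^2+2\,\omega_\chi\wedge d\chi\wedge\eta+2K\psi\,\omega_\chi\wedge f^*\mathrm{vol}_\Sigma$. The last term is positive over the annulus because $f$ is a submersion there, so large $K$ works. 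You also get $\omega'|_F=\omega_\chi|_F>0$.

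\emph{The class matching for $\delta=1$.} Your K\"ahler-cone claim needs an argument. Let $\pi:M\to M_0$ contract $E$ to a point $p$ of a geometrically ruled $M_0$. Take a K\"ahler class $\alpha_1$ on $M_0$ with fibre area $a+b$, and set $\alpha_0=\alpha_1+a\,\PD[F]$. Then $\pi^*\alpha_0-a\,\PD[E]$ has the following properties.
\begin{itemize}
\item Its square is $\alpha_1^2+a^2+2ab>0$.
\item It pairs to $a$, $b$ and $a+b$ with $E$, $C$ and $F$ respectively.
\item It is positive on the proper transform of every other curve $D_0\subset M_0$: with $m=\operatorname{mult}_pD_0\le D_0\cdot F$, one gets $\alpha_0\cdot D_0-am\ge\alpha_1\cdot D_0>0$.
\end{itemize}
The real Nakai--Moishezon criterion (or Buchdahl--Lamari) then makes this class K\"ahler.

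So your route replaces the symplectic neighbourhood theorem for $(-1)$-spheres with an explicit Thurston-type formula plus this class matching on $H^2(U;\R)$. The paper's route is shorter and never needs to know the classes on $U$.
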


\begin{proof}
Suppose that $W$ exists.  By Lemma~\ref{lem:localtop}, $H_1(U;\R)=0$.
The Mayer--Vietoris sequence for
$U=W\cup(U\setminus\operatorname{int}W)$, using
$H_1(\partial W;\R)=H_2(\partial W;\R)=0$, gives $H_1(W;\R)=0$.

In each singular fibre outside $\overline U$, blow down one of its
$(-1)$-components symplectically.  The chosen spheres are mutually
disjoint, and the operations can be supported away from $\overline U$.
Also preserve a smooth symplectic fibre $F$ over a point outside
$\overline D$.  The resulting closed symplectic manifold $M$ is an
$S^2$-bundle over $\Sigma$ blown up $\delta$ times.  Therefore
\[
b_2^+(M)=1,\qquad b_2^-(M)=1+\delta.
\]
The original $W$ and its boundary structure are unchanged, and
$F\subset M\setminus W$ is an actual embedded symplectic sphere with
$F^2=0$.  Theorem~\ref{thm:obstruction} applies and gives a contradiction.
\end{proof}

\section[Application to K\"ahler--Ricci Flow on compact K\"ahler surfaces]
{Application to K\"ahler--Ricci Flow on compact K\"ahler \\ surfaces}\label{sec:limits}

Throughout this section $\alpha\ne0$, and $f:X\to\Sigma$ is the
fibration in Proposition~\ref{prop:reduction}.

\subsection{The compact side of a contact hypersurface}

\begin{lemma}\label{lem:compactside}
Let $U$ be as in \eqref{eq:localU}. A smooth closed connected orientable
embedded hypersurface $Y\subset U$ separates $U$, and exactly one
component of $U\setminus Y$ has compact closure. Denote that closure by
$W$. If $Y$ is a rational homology sphere, then $H_1(W;\R)=0$ and the
intersection form of $W$ is negative definite, with the zero-dimensional
case allowed.
\end{lemma}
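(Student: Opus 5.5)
The separation claim follows from $H_1(U;\Z)=0$ together with the fact that $U$ has one end (Lemma~\ref{lem:localtop}); the homological claims come from Mayer--Vietoris, exactly as in the proof of Theorem~\ref{thm:obstruction}.

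\emph{Separation.} Since $U$ is simply connected, $H^1(U;\Z/2)=0$. A closed connected embedded hypersurface $Y$ that did not separate $U$ would admit a closed loop meeting $Y$ transversely exactly once. Its mod~2 intersection number with $Y$ would then give a nonzero class in $H^1(U;\Z/2)$, which is impossible. Hence $U\setminus Y$ is disconnected. It has exactly two components, because a tubular neighbourhood of $Y$ is a product $Y\times(-1,1)$ (by orientability of $Y$ and of $U$), and every component of $U\setminus Y$ meets this collar. Next, $Y$ is compact, so it lies in some compact $K\subset U$. The complement of a large compact set in $U$ is connected, since $U$ has one end, and it is disjoint from $Y$; so it lies in a single component of $U\setminus Y$. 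That component has noncompact closure. The other component is contained in a compact set, so its closure $W$ is compact, with $\partial W=Y$.

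\emph{Homology.} Put $C=U\setminus\operatorname{int}W$. Assume $Y$ is a rational homology sphere. Mayer--Vietoris for $U=W\cup_Y C$ gives
\[
 0=H_2(Y;\R)\to H_2(W;\R)\oplus H_2(C;\R)\to H_2(U;\R)\to H_1(Y;\R)=0,
\]
and also
\[
 H_1(W;\R)\oplus H_1(C;\R)\cong H_1(U;\R)=0 .
\]
So $H_1(W;\R)=0$ and $(\iota_W)_*$ is injective on $H_2$. As in the proof of Theorem~\ref{thm:obstruction}, Poincar\'e--Lefschetz duality for the compact manifold $W$ with rational homology sphere boundary shows that $Q_W$ is nondegenerate. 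Cycles in $W$ can be pushed into $\operatorname{int}W$, so $Q_U\circ(\iota_W)_*=Q_W$. Thus $(\iota_W)_*H_2(W;\R)$ is a nondegenerate subspace of $H_2(U;\R)$ on which $Q_U$ restricts to $Q_W$.

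\emph{Definiteness.} By Lemma~\ref{lem:localtop}, $Q_U$ is negative semidefinite. Its radical is $\R[F]$ in both cases: when $\delta=1$, $E+C=F$ spans the kernel of $\begin{pmatrix}-1&1\\1&-1\end{pmatrix}$. A subspace on which a negative semidefinite form is nondegenerate meets the radical only in $0$. On such a subspace the form is therefore negative definite, since an isotropic vector of a semidefinite form lies in its radical. So $Q_W$ is negative definite, and $b_2(W)\le\delta$, with $b_2(W)=0$ allowed.

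The main obstacle I expect is the rigorous separation step on a noncompact manifold. If the loop argument seems delicate, I would instead use the Mayer--Vietoris sequence with compact supports, together with $H^0_c(U)=0$ and $H^1_c(U;\Z/2)\cong H_3(U;\Z/2)=0$. The latter vanishing holds because $U\cong S^2\times D$ or its one-point blow-up.
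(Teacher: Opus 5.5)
Your proposal is correct and follows the paper's proof essentially step for step. Separation comes from a two-sided collar plus simple connectivity of $U$; you detect a once-crossing loop by its mod~$2$ intersection number with $Y$, where the paper integrates a closed one-form supported in the collar. The one-end structure of $U$ makes exactly one side compact. Mayer--Vietoris gives $H_1(W;\R)=0$ and injectivity on $H_2$, Poincar\'e--Lefschetz gives nondegeneracy of $Q_W$, and the semidefinite Cauchy--Schwarz argument gives negative definiteness. One point to tighten: ``one end'' does not by itself make the complement of a large compact set connected. Realize it as the paper does, with a compact $K\supset Y$ whose complement $U\setminus K\cong S^2\times(\R^2\setminus\overline{B_R(0)})$ is connected by the explicit model.
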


\begin{proof}
\emph{The two sides and the compact closure.}
Choose an auxiliary metric. The orientations of $U$ and $Y$ give a
global unit normal $\nu$ along $Y$. Extend it to a vector field near
$Y$ and let $\varphi_t$ be its flow
\cite[Lemma~8.6 and Theorems~4.5, 9.12]{Lee}. For sufficiently small
$\epsilon>0$ this gives a two-sided collar
\[
 \Phi:Y\times(-2\epsilon,2\epsilon)\xrightarrow{\cong}N\subset U,
 \qquad \Phi(y,t)=\varphi_t(y).
\]
Indeed, $d\Phi_{(y,0)}(v,a)=v+a\nu_y$ is invertible, so compactness
and the inverse function theorem give a local diffeomorphism on a
uniform collar. Failure of injectivity on arbitrarily small collars
would give colliding pairs converging to the same point of
$Y\times\{0\}$, contrary to local injectivity.

Suppose $U\setminus Y$ were connected. Fix $y_0\in Y$ and choose
a piecewise smooth path $c$ in the complement from $\Phi(y_0,\epsilon)$ to
$\Phi(y_0,-\epsilon)$. Its compact image misses a thinner collar
$\Phi(Y\times[-\eta,\eta])$, where $0<\eta<\epsilon$.
Choose $\rho\in C_c^\infty((-\eta,\eta))$ with $\int\rho\,dt=1$,
and define $\theta$ by $\Phi^*\theta=\rho(t)\,dt$ on $N$ and
zero elsewhere. For the loop $\gamma$ consisting of the transverse
collar arc followed by $c$,
\[
 d\theta=0,\qquad
 \int_\gamma\theta=\int_{-\epsilon}^{\epsilon}\rho(t)\,dt=1.
\]
This contradicts simple connectivity of $U$
(Lemma~\ref{lem:localtop}), since a closed one-form is then exact
\cite[Corollary~16.27]{Lee}.

A path from any complementary component to $Y$, stopped at its first
intersection with $Y$, shows that the component meets one of
$\Phi(Y\times(0,2\epsilon))$ and
$\Phi(Y\times(-2\epsilon,0))$.
Each collar side is connected, so there are exactly two components,
both with boundary $Y$.
The model in Lemma~\ref{lem:localtop} supplies a compact $K\subset U$
with $Y\subset\operatorname{int}K$ and
\[
 U\setminus K\cong S^2\times
       \bigl(\R^2\setminus\overline{B_R(0)}\bigr).
\]
This connected, non-relatively-compact set lies in one component;
the other has closure contained in $K$. Thus precisely the latter
closure $W$ is compact. Put $V=U\setminus\operatorname{int}W$.
Reversing the collar coordinate if necessary,
\[
 \Phi^{-1}(W\cap N)=Y\times(-2\epsilon,0],\qquad
 \Phi^{-1}(V\cap N)=Y\times[0,2\epsilon).
\]
Thus $W$ is a compact smooth manifold with boundary $Y$, oriented by $U$.

\smallskip\noindent
\emph{Homology and the intersection form.}
Assume now that $Y$ is a rational homology sphere. Set
\[
 N_0=\Phi(Y\times(-\epsilon,\epsilon)),\qquad
 \mathcal A=\operatorname{int}W\cup N_0,\qquad
 \mathcal B=\operatorname{int}V\cup N_0.
\]
These form an open cover of $U$, with
$\mathcal A\cap\mathcal B=N_0\simeq Y$.
The collar contractions $t\mapsto t-s\max(t,0)$ and
$t\mapsto t-s\min(t,0)$, extended by the identity elsewhere,
give $\mathcal A\simeq W$ and $\mathcal B\simeq V$.
Mayer--Vietoris \cite[Section~2.2]{Hatcher} therefore gives
\[
 \begin{aligned}
 0&\longrightarrow H_2(W;\R)\oplus H_2(V;\R)
   \longrightarrow H_2(U;\R)\longrightarrow0,\\
 0&\longrightarrow H_1(W;\R)\oplus H_1(V;\R)
   \longrightarrow H_1(U;\R)=0.
 \end{aligned}
\]
Hence $H_1(W;\R)=0$, and inclusion $i:W\hookrightarrow U$ induces
an injection $H_2(W;\R)\hookrightarrow H_2(U;\R)$.

Since $H^1(Y;\R)=H^2(Y;\R)=0$, the pair sequence, Poincar\'e--Lefschetz
duality $D_W$ and evaluation give isomorphisms
\cite[Section~3.1 and Theorem~3.43]{Hatcher}
\[
 H_2(W;\R)\xrightarrow{D_W^{-1}}H^2(W,Y;\R)
 \xrightarrow{\cong}H^2(W;\R)
 \xrightarrow{\mathrm{ev}}H_2(W;\R)^*.
\]
Their composite is $x\mapsto Q_W(x,\cdot)$, so $Q_W$ is
nondegenerate.

For $x,y\in H_2(W;\R)$, choose compactly supported closed two-forms $\eta_x,\eta_y$ on
$\operatorname{int}W$ representing $D_W^{-1}x,D_W^{-1}y$,
under $H_c^2(\operatorname{int}W;\R)\cong H^2(W,Y;\R)$,
and extend them by zero to $\widetilde\eta_x,\widetilde\eta_y$
on $U$. These extensions represent classes in $H_c^2(U;\R)$.
Compactly supported duality \cite[Theorem~3.35]{Hatcher}
and its de Rham realization \cite[Sections~5--6]{BottTu} give,
for every closed two-form $\beta$ on $U$,
\[
 \int_U\widetilde\eta_x\wedge\beta
 =\int_W\eta_x\wedge i^*\beta
 =\langle i^*[\beta],x\rangle
 =\langle[\beta],i_*x\rangle.
\]
Thus the extensions represent the compactly supported duals of
the included classes, and
\[
 Q_U(i_*x,i_*y)
 =\int_U\widetilde\eta_x\wedge\widetilde\eta_y
 =\int_W\eta_x\wedge\eta_y
 =Q_W(x,y).
\]
Lemma~\ref{lem:localtop} gives $Q_W(x,x)\le0$.
If $Q_W(x,x)=0$, Cauchy--Schwarz for $-Q_W$ yields
\[
 |Q_W(x,y)|^2\le Q_W(x,x)Q_W(y,y)=0
 \qquad\text{for all }y\in H_2(W;\R).
\]
Nondegeneracy forces $x=0$, proving negative definiteness.
\end{proof}

\begin{lemma}\label{lem:volume}
Let $(W,\Omega)$ be a compact symplectic four-manifold whose boundary
$Y$ is a rational homology sphere. Suppose $\Omega=d\lambda$ on a
collar of $Y$, and let $[\Omega]_{\mathrm{rel}}$ be the unique lift of
$[\Omega]\in H^2(W;\R)$ to $H^2(W,Y;\R)$.
With the boundary orientation on $Y$,
\begin{equation}\label{eq:volumeidentity}
 \int_W\Omega^2
 =\int_Y\lambda\wedge d\lambda+[\Omega]_{\mathrm{rel}}^2.
\end{equation}
If the intersection form of $W$ is negative definite and
$\lambda|_Y$ is a contact form, its Liouville vector field points
out of $W$.
\end{lemma}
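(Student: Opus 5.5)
The plan is to prove the identity by splitting $\Omega$ into a relative part and a boundary correction, and then to read off the direction of the Liouville field from a sign comparison.

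\emph{Step 1: the identity.} Since $H^1(Y;\R)=H^2(Y;\R)=0$, the restriction $H^2(W,Y;\R)\to H^2(W;\R)$ is an isomorphism, so the lift $[\Omega]_{\mathrm{rel}}$ exists and is unique. To represent it, choose a collar $Y\times(-\epsilon,0]$ on which $\Omega=d\lambda$. Take a cutoff $\chi$ that equals $1$ near $Y$ and vanishes outside the collar, and set
\[
 \eta=\Omega-d(\chi\lambda).
\]
The form $\eta$ is closed, vanishes near $Y$, and is cohomologous to $\Omega$ on $W$. It therefore represents $[\Omega]_{\mathrm{rel}}$ in compactly supported cohomology of $\operatorname{int}W$, and
\[
 [\Omega]_{\mathrm{rel}}^2=\int_W\eta\wedge\eta .
\]
Now expand $\Omega^2=(\eta+d(\chi\lambda))^2$. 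The cross term $2\,\eta\wedge d(\chi\lambda)=2\,d(\chi\lambda\wedge\eta)$ integrates to zero by Stokes, because $\eta$ vanishes on $Y$. The remaining term $d(\chi\lambda)\wedge d(\chi\lambda)=d\bigl(\chi\lambda\wedge d(\chi\lambda)\bigr)$ integrates by Stokes to $\int_Y\lambda\wedge d\lambda$, since $\chi=1$ near $Y$. This gives \eqref{eq:volumeidentity}. The only care needed is that the Stokes orientation is the boundary orientation of $Y$, which matches the statement.

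\emph{Step 2: the direction of the Liouville field.} Let $Z$ be the Liouville field, $\iota_Z\Omega=\lambda$. It is transverse to $Y$, since $\lambda|_Y$ is contact and $\iota_Z\Omega^2=2\lambda\wedge d\lambda$ is nonzero on $TY$. Choose the outward normal $n$ in the boundary orientation convention, so that $\iota_n\Omega^2|_Y$ is positive on the oriented $Y$. Then $Z$ is outward exactly when $\int_Y\lambda\wedge d\lambda>0$, and inward exactly when this integral is negative, because the sign of $Z$ against $n$ is constant on the connected hypersurface $Y$. (If $Y$ had several components, the same argument would apply componentwise; the sign is constant on each component.)

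Next use the two sides of \eqref{eq:volumeidentity}. Negative definiteness of the intersection form of $W$ makes the pairing on $H^2(W,Y;\R)$ negative definite, since this space is identified with $H_2(W;\R)$ by duality. Hence $[\Omega]_{\mathrm{rel}}^2\le0$. On the other hand, $\int_W\Omega^2>0$ because $\Omega^2$ is a positive volume form. The identity then forces
\[
 \int_Y\lambda\wedge d\lambda\ge\int_W\Omega^2>0,
\]
so $Z$ points out of $W$.

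\emph{Main obstacle.} The delicate point is the orientation bookkeeping in Step 2. One must check that "$\lambda\wedge d\lambda>0$ in the boundary orientation" is equivalent to "$Z$ outward", using $\iota_Z\Omega^2=2\lambda\wedge d\lambda$ together with the outward-normal-first convention. A second check is that the sign of $\lambda\wedge d\lambda$ is pointwise constant on connected $Y$, which holds because it is a nowhere-vanishing top form. Once these conventions are fixed, the rest is the Stokes computation in Step 1.
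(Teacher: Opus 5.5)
Your proof is correct and follows essentially the same route as the paper: the cutoff correction $\Omega-d(\chi\lambda)$ represents the relative lift and Stokes gives the identity, and then transversality of $Z$ together with $[\Omega]_{\mathrm{rel}}^2\le0<\int_W\Omega^2$ forces $Z$ to point outward. One small caveat concerns your parenthetical claim that the argument would work componentwise for a disconnected $Y$. That claim is not right, since positivity of the total integral does not force positivity on each component, but the point is moot because a rational homology sphere is connected.
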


\begin{proof}
The relative lift is unique because $H^1(Y;\R)=H^2(Y;\R)=0$.
Choose a cutoff $\chi$ supported in the collar and equal to one near
the boundary, and extend $\eta=\chi\lambda$ by zero. The closed form
$\Omega^0=\Omega-d\eta$ vanishes near $Y$ and represents this lift.
Stokes' theorem gives
\begin{align*}
 \int_W(\Omega^0)^2
 &=\int_W\Omega^2-2\int_Y\lambda\wedge\Omega
                  +\int_Y\lambda\wedge d\lambda\\
 &=\int_W\Omega^2-\int_Y\lambda\wedge d\lambda.
\end{align*}
This proves \eqref{eq:volumeidentity}. For the vector field $Z$ defined
by $\iota_Z\Omega=\lambda$, restriction to $Y$ gives
\[
 (\iota_Z\Omega^2)|_Y=2\lambda|_Y\wedge d(\lambda|_Y).
\]
Thus contactness makes $Z$ transverse to $Y$. Since $Y$ is connected,
$Z$ points either everywhere outwards or everywhere inwards. In the
latter case the boundary integral in \eqref{eq:volumeidentity} is
strictly negative. The relative square is nonpositive, because the
cohomological intersection form is dual to the negative definite
homological form. This contradicts $\int_W\Omega^2>0$, so $Z$ points
outwards.
\end{proof}

\subsection{Controlled primitives on an annulus}

\begin{lemma}\label{lem:primitive}
Let $Y$ be a closed oriented smooth manifold with $H^2(Y;\R)=0$, and let
$A=(a,b)\times Y$. Suppose closed two-forms $\Theta_i$ converge to zero
smoothly on $A$. On every product collar with compact closure in $A$
there are one-forms $\eta_i\to0$ smoothly with $d\eta_i=\Theta_i$.
\end{lemma}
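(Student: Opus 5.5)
The plan is to build an explicit, linear, continuous homotopy operator on the product collar. Since $H^2(Y;\R)=0$, every closed two-form on $A$ is exact. The issue is to choose primitives that depend linearly and continuously, in the $C^\infty$ topology, on the form.

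Fix a product collar $A'=(a',b')\times Y$ with $a<a'<b'<b$. Choose a base point $s_0\in(a',b')$ and write $\iota_s:Y\to A$ for the slice inclusion $y\mapsto(s,y)$. Decompose any two-form on $A$ as
\[
\Theta=ds\wedge\mu(s)+\nu(s),
\]
where $\mu(s)$ and $\nu(s)$ are $s$-dependent forms on $Y$ of degree one and two. Closedness gives $d_Y\nu=0$ and $\partial_s\nu=d_Y\mu$. Define
\[
K\Theta=\int_{s_0}^{s}\mu(\sigma)\,d\sigma,
\]
the standard fibre-integration (Poincaré) operator for $Y\times I$. The standard computation gives
\[
d(K\Theta)+K(d\Theta)=\Theta-\pi^*\iota_{s_0}^*\Theta,
\]
with $\pi$ the projection to $Y$. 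For closed $\Theta$ this reads $\Theta=d(K\Theta)+\pi^*\iota_{s_0}^*\Theta$.

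Next, handle the slice form $\iota_{s_0}^*\Theta$, which is a closed two-form on the closed manifold $Y$. Fix a Riemannian metric on $Y$ and use Hodge theory. Since $H^2(Y;\R)=0$, every closed two-form $\beta$ equals $d\,d^*G\beta$, where $G$ is the Green operator. The map $\beta\mapsto d^*G\beta$ is linear and continuous in the $C^\infty$ topology by elliptic estimates. Set
\[
\eta_i=K\Theta_i+\pi^*\,d^*G\,\iota_{s_0}^*\Theta_i .
\]
Then $d\eta_i=\Theta_i$ on $A'$.

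It remains to show $\eta_i\to0$ smoothly on $\overline{A'}$. Convergence $\Theta_i\to0$ in $C^k$ on the compact closure implies that $\mu_i$ and all their $Y$- and $s$-derivatives tend to zero uniformly on $[a',b']\times Y$. Differentiating under the integral sign then bounds the $C^k$ norm of $K\Theta_i$ by $(b'-a')$ times the $C^k$ norm of $\Theta_i$. Derivatives in $s$ of $K\Theta_i$ reduce to derivatives of $\mu_i$, so they are controlled as well. For the Hodge term, $\|d^*G\beta\|_{C^{k}}\le C_k\|\beta\|_{C^{k+1}}$ by Schauder or Sobolev estimates, and $\iota_{s_0}^*\Theta_i\to0$ smoothly.

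The only step needing real care is this continuity of the Hodge primitive. If one wants to avoid elliptic theory, an alternative is available. $H^2(Y)=0$ means the closed forms $Z^2(Y)$ equal $dΩ^1(Y)$, and $d$ has closed range between Fréchet spaces. The open mapping theorem then provides continuous primitives, though not a linear selection. The Hodge route is cleaner, so I would use it.
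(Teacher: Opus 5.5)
Your proposal is correct and matches the paper's proof. The paper also writes $\Theta_i=dr\wedge a_i(r)+b_i(r)$, takes the Hodge primitive $c_i=d_Y^*G\,b_i(r_0)$ of the slice form at a base point, sets $\eta_i=c_i+\int_{r_0}^r a_i(u)\,du$, and gets $\eta_i\to0$ from continuity of the Green operator and integration over a fixed compact interval. Your homotopy-formula packaging and the estimate $\|d^*G\beta\|_{C^k}\le C_k\|\beta\|_{C^{k+1}}$ only make the paper's one-line continuity claim quantitative.
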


\begin{proof}
Fix a compact interval in $(a,b)$ containing the collar in question,
choose $r_0$ in that interval, and fix a background metric on $Y$.
Write $\Theta_i=dr\wedge a_i(r)+b_i(r)$, with $a_i(r)$ and $b_i(r)$
forms on $Y$. Closedness says
$d_Yb_i=0$ and $\partial_rb_i=d_Ya_i$. Let $G$ be the Green operator
of the Hodge Laplacian on two-forms on $Y$. Since $b_i(r_0)$ is closed
and $H^2(Y;\R)=0$, the Hodge decomposition gives
\[
 c_i=d_Y^*G b_i(r_0),\qquad d_Yc_i=b_i(r_0).
\]
Set
\[
 \eta_i(r)=c_i+\int_{r_0}^r a_i(u)\,du.
\]
Direct differentiation gives $d\eta_i=\Theta_i$. Continuity of the
Hodge solution operator in the smooth topology, and integration over
the fixed compact interval, imply $\eta_i\to0$ smoothly.
\end{proof}

\begin{lemma}\label{lem:noannulus}
There do not exist times $s_i\to T$, scales $\rho_i\to0$, a nontrivial
finite subgroup $\Gamma\subset O(4)$ acting freely on $S^3$, and smooth
embeddings
\[
 \psi_i:\{1/2<|z|<2\}/\Gamma\longrightarrow X
\]
such that $\psi_i^*(\rho_i^{-2}g(s_i))$ converges smoothly locally to
the flat quotient metric.
\end{lemma}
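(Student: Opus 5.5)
Suppose such data exist; we aim to produce a domain $W$ contradicting Proposition~\ref{prop:filling}. Set $Y_i=\psi_i(\{|z|=1\}/\Gamma)$. First we localize. The scaled annuli have $g(s_i)$-diameter $O(\rho_i)\to0$. By Proposition~\ref{prop:reduction}, $f$ is uniformly Lipschitz, so $f(\psi_i(\text{annulus}))$ has diameter $O(\rho_i)$. Passing to a subsequence, these images converge to a point $q\in\Sigma$. Choose a disc $D\ni q$ as in \eqref{eq:localU}, with at most one singular value in $\overline D$ and none on $\partial D$. For large $i$ the images of the annuli lie in $U=f^{-1}(D)$. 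Lemma~\ref{lem:compactside} applies to $Y_i\subset U$, since $Y_i\cong S^3/\Gamma$ is a rational homology sphere. It gives a compact side $W_i$ with $H_1(W_i;\R)=0$ and negative definite intersection form.

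Next we make the boundary of $W_i$ a contact boundary. The pulled-back Kähler forms $\Theta_i=\psi_i^*(\rho_i^{-2}\omega(s_i))$ converge smoothly to the flat form $\omega_{\mathrm{flat}}$ on the quotient. That form is Kähler for some complex structure, because the convergence of the $J$'s forces the limit $J$ to be parallel and hence $\Gamma\subset U(2)$ after conjugation. It has the standard Liouville primitive $\lambda_0=\tfrac12 r^2 d^c\log r^2$, up to normalization. Applying Lemma~\ref{lem:primitive} to $\Theta_i-\omega_{\mathrm{flat}}$ on the annulus, which works since $H^2(S^3/\Gamma;\R)=0$, gives primitives $\lambda_i=\lambda_0+\eta_i$ with $\eta_i\to0$ smoothly. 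Contactness of $\lambda_0|_{\{|z|=1\}}$ is an open condition, so for large $i$ the restriction $\lambda_i|_{Y_i}$ is a contact form. Gray stability \cite[Theorem~10.5]{CdS} applied to the linear path $\lambda_0+t\eta_i$ identifies it, preserving coorientation, with $(S^3/\Gamma,\xi_{\mathrm{std}})$. We orient $Y_i$ as the boundary of $W_i$. Lemma~\ref{lem:volume}, using the negative definiteness from Lemma~\ref{lem:compactside}, then shows that the Liouville field points out of $W_i$, so $\partial W_i$ is strongly convex. This conclusion holds whichever side of the annulus $W_i$ lies on.

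One point needs care here: the contact structure induced with the boundary orientation must be $\xi_{\mathrm{std}}$ with its correct coorientation, and not its conjugate. If $W_i$ lies on the outer side of $Y_i$, the boundary orientation is reversed relative to the standard one. But Lemma~\ref{lem:volume} says that with the boundary orientation the form $\lambda\wedge d\lambda$ is positive, and that is compatible only with the inner side. Indeed $\int_{Y}\lambda_0\wedge d\lambda_0>0$ in the orientation of $S^3/\Gamma$ as the boundary of the ball quotient. So the lemma forces $W_i$ to be the side containing the inner end, and the identification is with $(S^3/\Gamma,\xi_{\mathrm{std}})$ in the correct orientation. Also, $W_i$ is Kähler (with form $\rho_i^{-2}\omega(s_i)$, or after rescaling, $\omega(s_i)$) and lies compactly in $U$.

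Finally, Proposition~\ref{prop:filling} applies to $X$ with $\omega(s_i)$ and the domain $W_i\Subset U$; its fibration hypotheses hold by Lemma~\ref{lem:fibres}. It gives a contradiction since $\Gamma$ is nontrivial. I expect the main obstacle to be the contact step: checking that the limiting complex structure makes $\Gamma$ unitary, and the orientation bookkeeping in Lemma~\ref{lem:volume}.
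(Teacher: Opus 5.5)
Your proposal is correct and follows essentially the same route as the paper's proof: a unitary limit of the pulled-back complex structures, localization in one fibre neighbourhood via the Lipschitz bound, the compact side from Lemma~\ref{lem:compactside}, controlled primitives from Lemma~\ref{lem:primitive} plus Gray stability, strong convexity from Lemma~\ref{lem:volume}, and the contradiction with Proposition~\ref{prop:filling}. Your explicit deduction that Lemma~\ref{lem:volume} forces $W_i$ to be the inner side correctly spells out the orientation bookkeeping that the paper leaves implicit in its coorientation statement. Two small fixes: take the diameter bound on a compact subannulus such as $\{3/4\le|z|\le3/2\}/\Gamma$, since the convergence is only local, and justify the subsequential convergence of $\psi_i^*J$ as the paper does, from $\nabla^{h_i}J_i=0$ with the connection matrices tending to zero.
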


\begin{proof}
\emph{The unitary limit.}
Suppose such embeddings exist and put
$A=\{1/2<|z|<2\}/\Gamma$. Set
\[
 h_i=\psi_i^*(\rho_i^{-2}g(s_i)),\qquad J_i=\psi_i^*J,
 \qquad \Omega_i=\psi_i^*(\rho_i^{-2}\omega(s_i)).
\]
Use the same notation for their lifts to the Euclidean covering
annulus. The K\"ahler condition gives
\[
 J_i^2=-\mathrm{Id},\qquad h_i(J_i\cdot,J_i\cdot)=h_i,
 \qquad \nabla^{h_i}J_i=0.
\]
In Cartesian coordinates, let $C_{i,a}$ be the connection matrices
of $h_i$. Since $h_i\to g_E$ smoothly locally,
\[
 C_{i,a}\longrightarrow0\text{ in }C^\infty_{\mathrm{loc}},
 \qquad
 \partial_aJ_i=J_iC_{i,a}-C_{i,a}J_i.
\]
Orthogonality bounds $J_i$ on compact subannuli; differentiating
this equation inductively gives
\[
 \|D^kJ_i\|_{C^0(K')}\longrightarrow0
 \qquad(k\ge1,\ K'\Subset\{1/2<|z|<2\}).
\]
Choose a convergent subsequence at one point. Integration along
paths in compact subannuli then gives $J_i\to J_0$ smoothly locally,
where $J_0$ is constant. Passing to the limit in the algebraic
identities and in deck equivariance yields
\[
 J_0^2=-\mathrm{Id},\qquad J_0^TJ_0=\mathrm{Id},\qquad
 \gamma J_0=J_0\gamma\quad(\gamma\in\Gamma).
\]
Choose one $Q\in O(4)$ with $Q^{-1}J_0Q=J_{\mathrm{std}}$.
Conjugating $\Gamma$ and precomposing every $\psi_i$ by this same
orthogonal map, we may assume
\[
 \Gamma\subset U(2),\qquad
 \Omega_i=h_i(J_i\cdot,\cdot)\longrightarrow
 \Omega_E=\sum_{j=1}^2dx_j\wedge dy_j
 \quad\text{smoothly locally}.
\]

\smallskip\noindent
\emph{Localization in one fibre neighbourhood.}
Fix $K=\{3/4\le |z|\le3/2\}/\Gamma$.
Radial and spherical paths, together with smooth metric convergence
on $K$, give a uniform bound for its intrinsic path diameter.
The lower bound \eqref{eq:schwarz} implies
\[
 \begin{gathered}
 |df(v)|_{g_\beta}\le c^{-1/2}|v|_{g(s_i)},\\
 \diam_{g_\beta}f(\psi_i(K))
 \le c^{-1/2}\rho_i\diam^{\mathrm{int}}_{h_i}(K)
 \le C\rho_i\longrightarrow0.
 \end{gathered}
\]
Fix $p_0\in K$ and pass to a subsequence with
$f(\psi_i(p_0))\to q\in\Sigma$. Choose a fixed disc $D$ containing
$q$ as in \eqref{eq:localU}. The diameter estimate gives
$\psi_i(K)\subset U=f^{-1}(D)$ for all large $i$.

\smallskip\noindent
\emph{The contact boundary and the filling contradiction.}
Put $Y=S^3/\Gamma$ and $Y_i=\psi_i(Y)$. Since
$\det_{\R}\gamma=|\det_{\C}\gamma|^2=1$, the standard orientation
of $S^3$ descends to $Y$. Finite-cover transfer
\cite[Section~3.G]{Hatcher} gives
\[
 H_k(Y;\mathbb Q)=0\quad(k=1,2),\qquad
 H^1(Y;\R)=H^2(Y;\R)=0.
\]
Thus $Y$ is a rational homology sphere, and
Lemma~\ref{lem:compactside} provides a compact domain
$W_i\Subset U$ with $\partial W_i=Y_i$ and negative definite
intersection form.

Identify $A=(1/2,2)\times Y$ radially and take
$C=(7/8,9/8)\times Y\subset\operatorname{int}K$.
Lemma~\ref{lem:primitive}, applied to the closed forms
$\Omega_i-\Omega_E$, gives
\[
 d\eta_i=\Omega_i-\Omega_E,\qquad
 \eta_i\longrightarrow0\quad\text{smoothly on }C.
\]
The $\Gamma$-invariant radial primitive
\[
 \lambda_E=\frac12\sum_{j=1}^2(x_j\,dy_j-y_j\,dx_j),
 \qquad d\lambda_E=\Omega_E
\]
descends to $C$. Set $\lambda_i=\lambda_E+\eta_i$ and
$\alpha_E=\lambda_E|_Y$, so that $d\lambda_i=\Omega_i$ and
$\ker\alpha_E=\xi_{\mathrm{std}}$.
For $\alpha_{i,t}=\alpha_E+t\eta_i|_Y$, $C^1$ convergence gives
\[
 \alpha_{i,t}\wedge d\alpha_{i,t}
 =(1+o(1))\alpha_E\wedge d\alpha_E>0
 \quad\text{uniformly on }[0,1]\times Y.
\]
Gray stability \cite[Theorem~10.5]{CdS} therefore supplies a
diffeomorphism $\phi_i:Y\to Y$ and a positive function $u_i$ with
\[
 \phi_i^*(\lambda_i|_Y)=u_i\alpha_E,\qquad u_i>0.
\]

Define $\widetilde\Omega_i$ globally on $X$ and
$\widetilde\lambda_i$ on $\psi_i(C)$ by
\[
 \widetilde\lambda_i=(\psi_i^{-1})^*\lambda_i,\qquad
 \widetilde\Omega_i=\rho_i^{-2}\omega(s_i),
 \qquad \beta_i=\widetilde\lambda_i|_{Y_i}.
\]
Then
\[
 d\widetilde\lambda_i=\widetilde\Omega_i
 \quad\text{on }\psi_i(C),\qquad
 (\psi_i|_Y\circ\phi_i)^*\beta_i=u_i\alpha_E.
\]
With the boundary orientation on $Y_i$, Lemma~\ref{lem:volume}
gives
\[
 \int_{Y_i}\beta_i\wedge d\beta_i
 =\int_{W_i}\widetilde\Omega_i^2
       -[\widetilde\Omega_i]_{\mathrm{rel}}^2>0,
\]
and shows that the Liouville field
$\iota_{Z_i}\widetilde\Omega_i=\widetilde\lambda_i$ points out of $W_i$.
Thus $Y_i$ is strongly convex.
The positive multiplier above identifies the induced contact
structure with $\xi_{\mathrm{std}}$, preserving coorientation.
Since $\widetilde\Omega_i$ is a global K\"ahler form on $X$,
$(W_i,\widetilde\Omega_i|_{W_i})$ contradicts
Proposition~\ref{prop:filling}.
\end{proof}

\subsection{Integral periods below the remaining-time scale}

\begin{proposition}[Exactness of smaller-scale limits]\label{prop:exact}
Let $s_i\to T$ and $r_i\to0$ satisfy
\begin{equation}\label{eq:scalegap}
 \frac{T-s_i}{r_i^2}\longrightarrow\infty.
\end{equation}
Suppose a connected K\"ahler manifold $(N,g_\infty,J_\infty)$ is a
smooth local limit of $r_i^{-2}g(s_i)$, with simultaneous convergence
of the pulled-back complex structures on an exhaustion of $N$.
Then its K\"ahler form $\Omega_\infty$ is exact:
\begin{equation}\label{eq:exact}
 [\Omega_\infty]=0\quad\text{in }H^2_{\mathrm{dR}}(N;\R).
\end{equation}
In particular $N$ contains no closed symplectic surface and no compact
complex curve.
\end{proposition}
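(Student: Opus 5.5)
The plan is to test $[\Omega_\infty]$ against compact two-cycles and use de Rham's theorem. Exactness of a closed form on $N$ is equivalent to the vanishing of all periods $\int_\Sigma\Omega_\infty$ over closed oriented surfaces $\Sigma\subset N$, or more generally over smooth integral $2$-cycles. Fix a compact piece $K$ of the exhaustion containing such a cycle $\Sigma$. Let $\phi_i:K\to X$ be the embeddings realizing the convergence. Then
\[
 \int_\Sigma\Omega_\infty=\lim_{i\to\infty}\int_\Sigma\phi_i^*\bigl(r_i^{-2}\omega(s_i)\bigr)
 =\lim_{i\to\infty}r_i^{-2}\bigl\langle[\omega(s_i)],(\phi_i)_*[\Sigma]\bigr\rangle.
\]
This holds because $\phi_i^*J\to J_\infty$ and $\phi_i^*(r_i^{-2}g(s_i))\to g_\infty$, so the pulled-back Kähler forms converge smoothly on $K$. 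So the task reduces to controlling the periods of $[\omega(s_i)]$ on the image classes $\sigma_i=(\phi_i)_*[\Sigma]\in H_2(X;\Z)$.

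\emph{Step 1 (localization).} As in the proof of Lemma~\ref{lem:noannulus}, the diameter of $\phi_i(K)$ in $g(s_i)$ is $O(r_i)\to0$. By the Lipschitz bound from Proposition~\ref{prop:reduction}, $f(\phi_i(K))$ has $g_\beta$-diameter $O(r_i)$. After passing to a subsequence, $\phi_i(K)\subset U=f^{-1}(D)$ for a fixed disc $D$ as in \eqref{eq:localU}. Hence $\sigma_i$ lies in the image of $H_2(U;\Z)$. By Lemma~\ref{lem:localtop}, $\alpha|_U=0$, so \eqref{eq:class} gives
\[
 \langle[\omega(s_i)],\sigma_i\rangle=2\pi(T-s_i)\,\langle c_1(X),\sigma_i\rangle .
\]
The quantity $\langle c_1(X),\sigma_i\rangle$ is an integer $k_i$.

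\emph{Step 2 (integrality forces vanishing).} Substituting,
\[
 r_i^{-2}\langle[\omega(s_i)],\sigma_i\rangle=2\pi k_i\,\frac{T-s_i}{r_i^2}.
\]
The left side converges to the finite number $\int_\Sigma\Omega_\infty$, while $(T-s_i)/r_i^2\to\infty$ by \eqref{eq:scalegap}. Hence the integers satisfy $k_i\to0$, so $k_i=0$ for large $i$, and the limit is $0$. This handles every integral cycle. Real classes in $H_2(N;\R)$ of compact support are spanned by integral classes of compactly supported cycles, since singular homology commutes with direct limits over the exhaustion. Therefore every period vanishes, and de Rham's theorem, together with the universal coefficient theorem $H^2_{\mathrm{dR}}(N)\cong\operatorname{Hom}(H_2(N;\Z),\R)$ for manifolds, gives \eqref{eq:exact}.

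The final assertion follows from Stokes: a closed symplectic surface, or a compact complex curve, $S$ would satisfy $0<\int_S\Omega_\infty=0$. The main obstacle I expect is Step 1 when $\Sigma$ is only a singular or immersed cycle: one needs a uniform bound on its $g(s_i)$-diameter and the subsequence choice of $D$ made independently of the cycle. To avoid dependence on the cycle, I would do the localization once per compact exhaustion piece $K_j$ and take a diagonal subsequence. Note also that $D$ may need to contain one singular value, which is allowed by Lemma~\ref{lem:localtop}.
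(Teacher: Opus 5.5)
Your proposal is correct and is essentially the paper's proof: you localize a compact set containing the cycle into one fibre neighbourhood $U=f^{-1}(D)$ using the Lipschitz bound coming from \eqref{eq:schwarz}, discard the $\alpha$-term there, write the rescaled period as $2\pi k_i(T-s_i)/r_i^2$ with $k_i=\langle c_1(X),\sigma_i\rangle\in\Z$, force $k_i=0$ via \eqref{eq:scalegap}, and finish with de Rham and the universal coefficient theorem. The paper resolves the technicalities you flagged in the way you suggest: it fixes $D$ once from a base point $p$ with $f(\psi_i(p))\to q$, and it adjoins finitely many connecting paths from $p$ to the cycle's support to obtain the $O(r_i)$ diameter bound (subsequence dependence is harmless anyway, since each limit $\int_A\Omega_\infty$ does not depend on the subsequence); it also handles possibly singular compact curves via their normalization in the final Stokes argument.
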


\begin{proof}
\emph{Representatives and the Lipschitz bound.}
For the convergence embeddings $\psi_i$, simultaneous convergence
of the metrics and complex structures gives, smoothly locally,
\[
 \begin{aligned}
 h_i&=\psi_i^*(r_i^{-2}g(s_i))\longrightarrow g_\infty,\\
 \Omega_i&=\psi_i^*(r_i^{-2}\omega(s_i))
   =h_i(\psi_i^*J\,\cdot,\cdot)\longrightarrow\Omega_\infty.
 \end{aligned}
\]
Fix the closed form
\[
 \theta=\frac{1}{2\pi}\Ric(\omega_0),\qquad [\theta]=c_1(X),
\]
where $c_1(X)\in H^2(X;\Z)$ is identified with its image in
$H^2(X;\R)$, using the integral Chern-class normalization
\cite[Chapter~V, \S9; Chapter~VIII, \S6]{Demailly}.
By \eqref{eq:class} and Proposition~\ref{prop:reduction},
\[
 [\omega(t)]=f^*[\beta]+2\pi(T-t)c_1(X).
\]
Thus, for each $t$, there is a smooth real one-form $\zeta_t$ on $X$
such that
\[
 \omega(t)=f^*\beta+2\pi(T-t)\theta+d\zeta_t.
\]
Holomorphicity of $f$ and \eqref{eq:schwarz} give, for every $v\in TX$,
\[
 c|df(v)|_{g_\beta}^2
 =c(f^*\beta)(v,Jv)
 \le\omega(t)(v,Jv)=|v|_{g(t)}^2.
\]
This holds also at critical points of $f$. Integrating along any
piecewise smooth path $\gamma$ gives
\[
 L_{g_\beta}(f\circ\gamma)\le c^{-1/2}L_{g(t)}(\gamma).
\]

\smallskip\noindent
\emph{A compact set of connecting paths.}
Fix $p\in N$ and pass once to a subsequence such that
$q_i=f(\psi_i(p))\to q\in\Sigma$. Choose a fixed coordinate disc
$D\ni q$ and put $U=f^{-1}(D)$.

Let $A=\sum_{\ell=1}^m n_\ell\sigma_\ell$ be a nonzero smooth
integral singular two-cycle, where $n_\ell\in\Z$ and
$\sigma_\ell:\Delta^2\to N$.
Fix a vertex $v_0\in\Delta^2$ and choose piecewise smooth paths
$\gamma_\ell$ from $p$ to $\sigma_\ell(v_0)$.
Using the Euclidean metric on $\Delta^2$ and $g_\infty$ on $N$, set
\[
 \begin{aligned}
 K&=\bigcup_{\ell=1}^m
 \bigl(\sigma_\ell(\Delta^2)\cup\gamma_\ell([0,1])\bigr),\\
 L_A&=\max_{1\le\ell\le m}
 \left\{L_{g_\infty}(\gamma_\ell)
       +\diam(\Delta^2)\|d\sigma_\ell\|_{C^0(\Delta^2)}\right\}<\infty.
 \end{aligned}
\]
The set $K$ is compact and contains $p$ and $\supp A$.
For $z\in\Delta^2$, the path
\[
 t\longmapsto\sigma_\ell((1-t)v_0+tz)
\]
has length at most
$\diam(\Delta^2)\|d\sigma_\ell\|_{C^0(\Delta^2)}$.
Concatenation with $\gamma_\ell$, and initial segments of
$\gamma_\ell$, therefore join $p$ to every point of $K$ within
$K$ with length at most $L_A$.

For all large $i$, the embeddings are defined on $K$ and
$h_i\le2g_\infty$ there. For these connecting paths,
\[
 L_{g(s_i)}(\psi_i\circ\gamma)
 =r_iL_{h_i}(\gamma)\le\sqrt2\,r_iL_A.
\]
Consequently,
\[
 \sup_{x\in K}d_{g_\beta}(f(\psi_i(x)),q)
 \le d_{g_\beta}(q_i,q)+\sqrt{2/c}\,L_A r_i
 \longrightarrow0.
\]
Thus $\psi_i(K)\subset U$ for all sufficiently large $i$.

\smallskip\noindent
\emph{Vanishing of integral periods.}
Since $D$ is contractible, write $\beta|_D=d\xi_D$
\cite[Theorem~18.14]{Lee}. For the integral cycle
$B_i=\psi_{i\#}A\subset U$, Stokes' theorem gives
\[
 \int_{B_i}f^*\beta
 =\int_{B_i}d(f^*\xi_D)=0,\qquad
 \int_{B_i}d\zeta_{s_i}=0.
\]
The remaining term pairs $c_1(X)\in H^2(X;\Z)$ with
$[B_i]\in H_2(X;\Z)$:
\[
 k_i=\int_{B_i}\theta
 =\langle c_1(X),[B_i]\rangle\in\Z.
\]
The representative formula above therefore yields
\begin{equation}\label{eq:period}
 \int_A\Omega_i
 =r_i^{-2}\int_{B_i}\omega(s_i)
 =2\pi\frac{T-s_i}{r_i^2}\,k_i.
\end{equation}
Smooth convergence on the fixed finite cycle gives
$\int_A\Omega_i\to\int_A\Omega_\infty$. Hence \eqref{eq:scalegap}
implies
\[
 |k_i|=\frac{r_i^2}{2\pi(T-s_i)}
              \left|\int_A\Omega_i\right|\longrightarrow0,
 \qquad
 k_i=0\quad\text{for all }i\ge i_0(A).
\]
It follows that $\int_A\Omega_\infty=0$ for every smooth integral
two-cycle $A$. The threshold may depend on $A$.

Smooth singular homology and the de Rham theorem
\cite[Theorems~18.7 and~18.14]{Lee}, together with the universal
coefficient theorem \cite[Section~3.1]{Hatcher}, identify
\[
 H^2_{\mathrm{dR}}(N;\R)
 \cong\operatorname{Hom}(H_2(N;\Z),\R),
 \qquad
 [\Omega_\infty]\longmapsto
 \left([A]\longmapsto\int_A\Omega_\infty\right)=0.
\]
This proves \eqref{eq:exact} and gives a global smooth one-form
$\lambda$ with $\Omega_\infty=d\lambda$, also when $N$ is
noncompact or has infinitely generated homology.

Stokes' theorem now contradicts the positive area of a closed
symplectic surface. For a compact complex curve, take the
normalization $\nu:\widetilde C\to N$ of a reduced irreducible
component \cite[Chapter~II, Corollary~7.7 and Theorem~7.12]{Demailly}.
The form $\nu^*\Omega_\infty$ is nonnegative and is positive over
the regular locus, so
\[
 0<\int_{\widetilde C}\nu^*\Omega_\infty
   =\int_{\widetilde C}d(\nu^*\lambda)=0,
\]
again a contradiction.
\end{proof}

\section{Blow up analysis and the Type I estimate}\label{sec:tangents}

A \emph{shrinker} is a complete gradient shrinking Ricci soliton.
We extract a shrinking limit at a scale smaller than the remaining-time
scale.  In this section all Ricci-flow entropy and compactness results are
used in the Riemannian convention
$\bar g(u)=g(2u)$, so that $\partial_u\bar g=-2\Ric(\bar g)$.

\begin{proposition}\label{prop:twoscale}
Let $(X,J,g(t))$, $0\leq t<T<\infty$, be a K\"ahler--Ricci flow on a compact,
connected complex surface.  Suppose that
\[
 \sup_{X\times[0,T)}(T-t)|\Rm(g(t))|=\infty.
\]
Then there exist $s_j\to T$, $r_j\to0$, and a connected, complete gradient
K\"ahler shrinking Ricci soliton orbifold $(N,g_N,J_N,h)$, with at most isolated
quotient singularities, such that
\begin{equation}\label{eq:twoscale-ratio}
 \frac{T-s_j}{r_j^2}\longrightarrow\infty.
\end{equation}
There are an exhaustion $V_j$ of $N_{\reg}$ and smooth embeddings
$\psi_j:V_j\to X$ for which
\begin{equation}\label{eq:twoscale-convergence}
 \psi_j^*(r_j^{-2}g(s_j))\longrightarrow g_N,
 \qquad \psi_j^*J\longrightarrow J_N
 \quad\text{in }C^\infty_{\mathrm{loc}}(N_{\reg}).
\end{equation}
The normalization can be chosen so that
\begin{equation}\label{eq:normalized-shrinker}
 \Ric(g_N)+\nabla^2h=g_N.
\end{equation}
The associated shrinking metric flow, equipped with its normalized conjugate
heat measure, has strictly negative constant Nash entropy.  In particular it
is not the Gaussian shrinking flow on $\C^2$.
\end{proposition}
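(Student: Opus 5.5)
Since $\sup_{X\times[0,T)}(T-t)|\Rm(g(t))|=\infty$, the plan is to choose the scales by point picking and then take two successive blow-up limits, following \cite[Lemma~2.11]{BCCD}.

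\emph{Step 1: basepoints and first scale.} Pick $(x_i,t_i)$ with $t_i\to T$ and
\[
 Q_i=|\Rm|(x_i,t_i),\qquad Q_i(T-t_i)\to\infty .
\]
After a standard point-picking adjustment we may also assume $|\Rm|\le 4Q_i$ on a backward parabolic neighbourhood of size comparable to $Q_i^{-1/2}$.

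\emph{Step 2: bounded entropy.} Perelman's monotonicity together with Bamler's entropy estimates \cite{BamEntropy} bounds the pointed Nash entropy $\mathcal N_{x_i,t_i}(\tau)$ from below, uniformly for $\tau\le T$. Monotonicity in $\tau$ then gives a limit $\mathcal N_{x_i,t_i}(\tau)\to\mathcal N_\infty(\tau)$ in a suitable sense.

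\emph{Step 3: the two-stage extraction.} This is the central step, done as in \cite[Lemma~2.11]{BCCD}. At each basepoint, the entropy $\mathcal N_{x_i,t_i}(\tau)$ is nonincreasing in $\tau$ and equals $0$ at $\tau=0$. It is strictly negative at $\tau\sim Q_i^{-1}$, because the curvature there is normalized to be $1$ and Bamler's $\epsilon$-regularity shows the flow is not almost Euclidean at that scale.

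Next, a pigeonhole argument over dyadic scales $\tau\in[Q_i^{-1},\,T-t_i]$ produces a scale $r_i^2$ with
\[
 Q_i^{-1}\le r_i^2,\qquad r_i^2/(T-t_i)\to0,
\]
on which the entropy is almost constant across a long range $[\epsilon_i^{-1}r_i^2,\ \epsilon_ir_i^{-2}\cdot r_i^2]$. This is possible because the total drop is bounded while the number of dyadic scales in $[Q_i^{-1},\,T-t_i]$ tends to infinity. Set $s_i=t_i$, or shift $s_i$ slightly backward so that the conjugate heat kernel basepoint is centred.

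Rescale by $r_i^{-2}$. Bamler's compactness \cite{BamCompact} gives $\mathbb F$-convergence to a metric flow. Almost-constancy of the entropy over an unboundedly long range of scales forces the limit to be a self-similar shrinker with constant Nash entropy $\mathcal N_\infty\le\mathcal N(Q_i^{-1})$-limit $<0$. To keep strict negativity, choose $r_i$ so that the entropy at scale $r_i^2$ is at most $\frac12$ of the value at scale $Q_i^{-1}$; the bounded-drop pigeonhole still permits this.

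\emph{Step 4: structure of the limit.} Bamler's structure theory \cite{BamStructure} in real dimension four says the singular set has codimension at least four. The limit is therefore a smooth gradient shrinker away from isolated points, and its tangent cones there are $\R^4/\Gamma$. Smooth convergence on the regular part gives $\psi_j$ and the convergence \eqref{eq:twoscale-convergence} of the metrics. The complex structures converge as in the argument of Lemma~\ref{lem:noannulus} (parallel $J$ with $C^\infty_{\mathrm{loc}}$ control), so the limit is K\"ahler. The orbifold statement needs each isolated singularity to be a genuine quotient, with the soliton extending as an orbifold structure; this follows from the Kähler structure and the standard removable-singularity results. Normalizing the time slice at $-1$ gives \eqref{eq:normalized-shrinker}. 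Negative entropy excludes the Gaussian flow.

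\emph{Main obstacle.} The hardest step is Step 3: choosing $r_i$ so that simultaneously (a) $r_i^2\ll T-t_i$, (b) the limit is a shrinker, and (c) the entropy stays strictly negative. I would first try the dyadic pigeonhole with a quantitative threshold (entropy at most half the small-scale value), together with the $\epsilon$-regularity lower bound on the entropy drop at the curvature scale.
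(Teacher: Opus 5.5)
Your outline is correct, but it extracts the shrinker differently from the paper. The paper does not pinch the entropy on the smooth flows. It rescales by the remaining time $\tau_i=T-t_i$ and takes one $\mathbb F$-limit $(\mathcal X,\nu_{x_\infty})$. It then uses $\varepsilon$-regularity at each fixed scale $a$, together with continuity of the Nash entropy, to get $\mathcal N_{x_\infty}(a)\le-\varepsilon_0$ for all $a>0$. Next it passes to a tangent flow of $\mathcal X$ at $x_\infty$, whose entropy is automatically constant and equal to the density $\Theta\le-\varepsilon_0$. Finally it realizes that tangent flow in the original flow by a diagonal choice $i(j)$, so that $r_j^2=\epsilon_j^2\tau_{i(j)}$ and $(T-s_j)/r_j^2=1+\epsilon_j^{-2}$ exactly. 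This two-stage scaling is what the paper takes from \cite[Lemma~2.11]{BCCD}; it is not a pigeonhole argument.

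You instead select the scale directly. On $[CQ_i^{-1},\epsilon_i(T-t_i)]$ the nonincreasing function $\tau\mapsto\mathcal N_{x_i,t_i}(\tau)$ drops by at most $Y$. Taking $\epsilon_i\to0$ with $\log(1/\epsilon_i)=o(\log(Q_i(T-t_i)))$, some window $[\epsilon_ir_i^2,\epsilon_i^{-1}r_i^2]$ has drop $o(1)$. A single $\mathbb F$-limit at scale $r_i$ then has constant pointed entropy on $(0,\infty)$. This avoids the intermediate limit flow and the diagonal realization, and it builds $r_i^2\ll T-t_i$ into the choice of window. The price is that you need ``constant pointed Nash entropy forces a metric soliton'' for a general $\mathbb F$-limit of smooth flows, not only for tangent flows; it follows from the same equality-case argument used in \cite{BamStructure}. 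Once you have it, a self-similar limit is its own tangent flow at $x_\infty$, so the four-dimensional orbifold results apply. The paper's route uses only the tangent-flow theorems as stated, at the cost of the diagonal argument.

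Three small corrections:
\begin{itemize}
\item Strict negativity, your ``main obstacle'' (c), is automatic. Once $\epsilon_ir_i^2\ge CQ_i^{-1}$, monotonicity alone gives $\mathcal N_{x_i,t_i}(ar_i^2)\le-\varepsilon_0$ for each fixed $a$ and large $i$. The ``half'' threshold and the point-picking in Step~1 are therefore superfluous.
\item The slice converging to $g_N$ is $s_i=t_i-r_i^2$, i.e.\ rescaled time $-1$. It is not $s_i=t_i$, which is the singular time of the limiting soliton.
\item Codimension four alone only gives Minkowski dimension zero, not isolated points. Isolatedness and the orbifold metric come from Bamler's four-dimensional structure results. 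The extension of $J_N$ and $h$ across the orbifold points is not a generic removable-singularity fact. As in the paper, it follows by integrating the first-order system $\nabla J_N=0$, $\nabla^2h=g_N-\Ric(g_N)$ radially, after which the isotropy lies in $U(2)$.
\end{itemize}
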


\begin{proof}
Choose $(x_i,t_i)\in X\times[0,T)$ such that, with $\tau_i=T-t_i$,
\begin{equation}\label{eq:typeii-sequence}
 \tau_i|\Rm(g(t_i))|(x_i)\longrightarrow\infty.
\end{equation}
Smoothness on compact time intervals gives $t_i\to T$. Put
$A_i=t_i/(2\tau_i)\to\infty$ and define
\begin{equation}\label{eq:first-rescaling}
 G_i(u)=\tau_i^{-1}g(t_i+2\tau_i u),
 \qquad -A_i\le u\le0.
\end{equation}
Then
\[
 \partial_uG_i=-2\Ric(G_i),\qquad
 |\Rm(G_i(0))|(x_i)\longrightarrow\infty,\qquad
 R_{G_i}\ge-C\tau_i,
\]
where the last bound follows from the scalar-curvature maximum
principle \cite[Section~2.5 and Lemma~2.8]{BamEntropy}.

\smallskip\noindent
\emph{Uniform entropy bounds and negative density.}
Write $g_0=g(0)$. The fixed-metric Sobolev inequality, obtained
from \cite[Theorem~3.28]{HunterSobolev} in finitely many coordinate
charts, and Jensen's inequality give
\[
 \int_Xv^2\log v^2\,dV_{g_0}
 \le2\log\!\bigl(C(1+E)\bigr),\qquad
 E=\int_X|\nabla v|^2\,dV_{g_0},\qquad \|v\|_{L^2(g_0)}=1.
\]
For $v=(4\pi b)^{-1}e^{-f/2}$ and $b\in[T/4,T/2]$, Perelman's
functional therefore satisfies
\[
 \mathcal W(g_0,f,b)\ge TE-2\log(1+E)-C\ge-Y,\qquad
 \mu(g_0,b)\ge-Y
\]
for a fixed $Y<\infty$.

Set $u_i=t_i/2$ and let
\[
 d\nu_{x_i,u_i;b}=(4\pi b)^{-2}e^{-f_{i,b}}\,
 dV_{\bar g(u_i-b)},\qquad
 \mathcal N_{x_i,u_i}(b)=\int f_{i,b}\,d\nu_{x_i,u_i;b}-2
\]
be the conjugate heat-kernel measure and pointed Nash entropy of
$\bar g(u)=g(2u)$. For large $i$, $u_i\in[T/4,T/2]$.
Entropy monotonicity and the relation between $\mathcal N$ and
$\mathcal W$ \cite[Definition~5.1 and Proposition~5.2]{BamEntropy}
give, for $0<b\le u_i$,
\begin{equation}\label{eq:initial-entropy}
 \begin{aligned}
 \mathcal W_{x_i,u_i}(b)
 &\ge\mathcal W_{x_i,u_i}(u_i)
 \ge\mu(g_0,u_i)\ge-Y,\\
 \mathcal N_{x_i,u_i}(b)
 &=\frac1b\int_0^b\mathcal W_{x_i,u_i}(v)\,dv\ge-Y.
 \end{aligned}
\end{equation}
Thus the pointed entropy of $G_i$ obeys
\[
 \mathcal N_i(a)=\mathcal N_{x_i,u_i}(a\tau_i)\ge-Y
 \qquad(0<a\le A_i).
\]
Bamler's compactness and structure theorems
\cite[Corollary~7.5 and Theorem~7.6]{BamCompact},
\cite[Section~2.1 and Theorems~2.4--2.5]{BamStructure}
give a subsequential ancient metric flow pair
$(\mathcal X,\nu_{x_\infty})$ in the $\mathbb F$ topology,
based at terminal time zero.

Fix $a>0$. Entropy regularity at spatial radius $\sqrt a$
\cite[(9.1), Definition~10.1 and Theorem~10.2]{BamEntropy} gives
\begin{equation}\label{eq:entropy-regularity}
 \mathcal N_i(a)\ge-\varepsilon_0
 \quad\Longrightarrow\quad
 |\Rm(G_i(0))|(x_i)\le\varepsilon_0^{-2}a^{-1}.
\end{equation}
The backward interval $[-a,0]$ is available for large $i$;
this estimate applies at its terminal time. By
\eqref{eq:typeii-sequence}, $\mathcal N_i(a)<-\varepsilon_0$
eventually. Entropy continuity
\cite[Theorem~2.10]{BamStructure} and monotonicity imply
\[
 -Y\le\mathcal N_{x_\infty}(a)\le-\varepsilon_0
 \qquad(a>0),
\]
and hence
\begin{equation}\label{eq:negative-density}
 \Theta:=\lim_{a\downarrow0}\mathcal N_{x_\infty}(a),
 \qquad -Y\le\Theta\le-\varepsilon_0<0.
\end{equation}
The order is $i\to\infty$ at each fixed $a$, followed by $a\downarrow0$.

\smallskip\noindent
\emph{The tangent flow and its realization in the original flow.}
Choose spatial scales $\epsilon_j\downarrow0$ for which the
parabolic rescalings $\mathcal X^{(j)}$ at $x_\infty$ converge
to a tangent pair $\mathcal T$
\cite[Theorem~7.6]{BamCompact},
\cite[Theorem~2.6 and Addendum~2.7]{BamStructure}.
Its Nash entropy is identically $\Theta$
\cite[Theorem~2.11]{BamStructure}.
By \cite[Theorems~2.18 and~2.46]{BamStructure}, its completed
time $-1/2$ slice $(N,g_N)$ is a connected complete Riemannian
orbifold with isolated singularities. The positive conjugate
heat density fixes a single smooth potential $h$ on $N_{\reg}$:
\begin{equation}\label{eq:shrinker-probability}
 d\nu_N=(2\pi)^{-2}e^{-h}\,dV_{g_N},\qquad
 \int_Nd\nu_N=1,\qquad
 \int_Nh\,d\nu_N-2=\Theta.
\end{equation}
The soliton equation at backward time $1/2$ is
$\Ric(g_N)+\nabla^2h=g_N$.

For each fixed $j$, set
\[
 H_{i,j}(v)=\epsilon_j^{-2}G_i(\epsilon_j^2v)
 =\frac{g(t_i+2\epsilon_j^2\tau_i v)}{\epsilon_j^2\tau_i}.
\]
These flows converge to $\mathcal X^{(j)}$ as $i\to\infty$.
The diagonal realization theorem
\cite[Section~6.8 and Theorem~6.58]{BamCompact}
allows $i(j)\nearrow\infty$, chosen after $\epsilon_j$, so that
\[
 H_j:=H_{i(j),j}\xrightarrow{\mathbb F}\mathcal T,\qquad
 \frac{A_{i(j)}}{\epsilon_j^2}>j+1,\qquad
 t_{i(j)}-\epsilon_j^2\tau_{i(j)}>0.
\]
Moreover,
\[
 \mathcal N_{H_j,x_{i(j)},0}(a)
 =\mathcal N_{i(j)}(\epsilon_j^2a)\ge-Y
 \qquad\left(0<a\le\frac{A_{i(j)}}{\epsilon_j^2}\right).
\]
The time-preserving regular-part convergence embeddings
\cite[Theorem~9.31(a)]{BamCompact},
\cite[Theorem~2.5]{BamStructure}, restricted to time $-1/2$,
therefore give embeddings $\psi_j:V_j\to X$ on an exhaustion
of $N_{\reg}$, with $\psi_j^*H_j(-1/2)\to g_N$ smoothly locally.
Set
\begin{equation}\label{eq:diagonal-times}
 s_j=t_{i(j)}-\epsilon_j^2\tau_{i(j)},\qquad
 r_j^2=\epsilon_j^2\tau_{i(j)}.
\end{equation}
Then $H_j(-1/2)=r_j^{-2}g(s_j)$, $s_j\to T$, $r_j\to0$, and
\begin{equation}\label{eq:ratio-calculation}
 \frac{T-s_j}{r_j^2}
 =\frac{\tau_{i(j)}+\epsilon_j^2\tau_{i(j)}}
        {\epsilon_j^2\tau_{i(j)}}
 =1+\epsilon_j^{-2}\longrightarrow\infty.
\end{equation}
This is the two-stage scaling of
\cite[Lemma~2.11, (2.13)--(2.16)]{BCCD}.

\smallskip\noindent
\emph{The K\"ahler structure and extension across the singularities.}
On the same embeddings put
$\widetilde g_j=\psi_j^*(r_j^{-2}g(s_j))$ and
$\widetilde J_j=\psi_j^*J$. In coordinate charts, the parallel
equation is
\[
 \partial_a\widetilde J_j
 =\widetilde J_jC_{j,a}-C_{j,a}\widetilde J_j,
\]
where $C_{j,a}$ are the connection matrices of $\widetilde g_j$.
Orthogonality bounds $\widetilde J_j$; differentiating this equation
and using smooth metric convergence gives bounds of every order.
A diagonal subsequence yields
\[
 \widetilde J_j\longrightarrow J_N,\qquad
 J_N^2=-\mathrm{Id},\quad
 g_N(J_N\cdot,J_N\cdot)=g_N,\quad \nabla^{g_N}J_N=0.
\]
The Nijenhuis tensor vanishes, so the Newlander--Nirenberg theorem
\cite[Theorem~15.4]{CdS} makes $J_N$ integrable. Thus
\[
 \Omega_N=g_N(J_N\cdot,\cdot),\qquad
 \psi_j^*(r_j^{-2}\omega(s_j))\longrightarrow\Omega_N
 \quad\text{in }C^\infty_{\mathrm{loc}}(N_{\reg})
\]
defines a K\"ahler structure on $N_{\reg}$.

In a singularity chart $B/\Gamma$, lift the smooth metric to
$\widehat g$ on $B\subset\R^4$ and write
$S=\widehat g-\Ric(\widehat g)$, $w_b=\partial_b\widehat h$.
On $B\setminus\{0\}$ the lifted tensors satisfy
\[
 \partial_a\widehat J=\widehat JC_a-C_a\widehat J,\qquad
 \partial_a\widehat h=w_a,\qquad
 \partial_aw_b=S_{ab}+\Gamma^c_{ab}w_c.
\]
On a smaller ball, join a fixed point $p_*$ to each punctured
point by radial and spherical paths of uniformly bounded length $L$.
Parallel transport and $\nabla^2\widehat h=S$ give
\[
 |\nabla\widehat h|(x)
 \le|\nabla\widehat h|(p_*)+L\|S\|_{C^0}=:G,\qquad
 |\widehat h(x)|\le|\widehat h(p_*)|+LG.
\]
Together with orthogonality of $\widehat J$, the first-order
system bounds $U=(\widehat J,\widehat h,w)$ and its first derivatives.
Points in $B_\eta\setminus\{0\}$ can be joined there by radial
and spherical paths of length at most $(2+\pi)\eta$. Hence
\[
 \operatorname{osc}_{B_\eta\setminus\{0\}}U\le C\eta,
\]
so $U$ has a unique continuous value at zero.
Write the system as $\partial_aU=F_a(x,U)$, with smooth coefficients
and affine dependence on $U$.
The radial equations
\[
 \frac{d}{dt}V_x(t)=\sum_a x^aF_a(tx,V_x(t)),\qquad
 V_x(0)=U(0),\qquad U(x)=V_x(1)
\]
and smooth dependence for ordinary differential equations
\cite[Chapter~9]{Lee} give a smooth extension.
Uniqueness preserves $\Gamma$-invariance and agreement on overlaps.
Thus $J_N$ and $h$ extend as smooth orbifold tensors, satisfying
\eqref{eq:normalized-shrinker} everywhere.

The isotropy representation commutes with $\widehat J(0)$ and
is orthogonal, so it lies in $U(2)$; isolated isotropy implies
freeness on $S^3$. To obtain holomorphic linear quotient coordinates,
choose centred holomorphic coordinates $z$ with $dz_0=\mathrm{Id}$ and set
\[
 F(p)=\frac1{|\Gamma|}\sum_{\gamma\in\Gamma}
       (D\gamma_0)^{-1}z(\gamma p),\qquad
 dF_0=\mathrm{Id},\qquad
 F(\delta p)=D\delta_0F(p).
\]
The holomorphic inverse function theorem gives the required chart.

Finally, $\Ric(g_N)$ and $g_N$ are $J_N$-invariant. With
$A=(\nabla^2h)^\sharp$, the soliton equation therefore yields
\[
 AJ_N=J_NA,\qquad
 \mathcal L_{\nabla h}J_N=J_NA-AJ_N=0.
\]
Thus the gradient is real holomorphic. The singular set has zero
measure, so the extension preserves \eqref{eq:shrinker-probability}
and the constant entropy $\Theta<0$. The Gaussian shrinking flow
has Nash entropy zero and is therefore excluded.
\end{proof}

\begin{proof}[Proof of Theorem~\ref{thm:main}]
If $\alpha=0$, then $[\omega_0]=2\pi Tc_1(X)$ and $X$ is a del Pezzo
surface. The soliton existence results \cite{TianDelPezzo,WZ} and
normalized-flow convergence \cite[Corollaries~1.5 and~4.3]{DS} give
\[
 \widetilde g(s)=\frac{g(t)}{T-t},\qquad
 s=\log\frac{T}{T-t},\qquad
 \sup_{X\times[0,\infty)}|\Rm(\widetilde g(s))|<\infty.
\]
Since $(T-t)|\Rm(g(t))|=|\Rm(\widetilde g(s))|$, this proves
\eqref{eq:typeI} in the extinction case.
Assume henceforth $\alpha\ne0$, so that Proposition~\ref{prop:reduction}
and the results of Section~\ref{sec:limits} apply.
Suppose that \eqref{eq:typeI} fails.
Proposition~\ref{prop:twoscale} gives $(N,g_N,J_N,h)$ and sequences
$s_j,r_j$ satisfying \eqref{eq:twoscale-ratio}--\eqref{eq:twoscale-convergence}.

\smallskip\noindent
\emph{Exclusion of orbifold points.}
Suppose that $p\in N$ is singular. In equivariant normal coordinates,
its chart is $B_R(0)/\Gamma$, where $\Gamma\subset U(2)$ is nontrivial
and acts freely on $S^3$, and the lifted metric satisfies
$\widetilde g_{ab}(0)=\delta_{ab}$. Set
\[
 A_\Gamma^+=\{1/3<|z|<3\}/\Gamma,\qquad
 A_\Gamma=\{1/2<|z|<2\}/\Gamma,\qquad K=\overline A_\Gamma.
\]
Choose $\eta_k\downarrow0$ with $3\eta_k<R$. In this chart the maps
$\iota_k:A_\Gamma^+\to N_{\reg}$, $[z]\mapsto[\eta_kz]$, satisfy
\[
 \eta_k^{-2}\iota_k^*g_N
 =\sum_{a,b}\widetilde g_{ab}(\eta_kz)\,dz^a\,dz^b
 \longrightarrow g_E
 \quad\text{in }C^\infty(K).
\]
Here the coefficient identity is written on the covering annulus.
Put $g_j^\#=\psi_j^*(r_j^{-2}g(s_j))$. For each fixed $k$,
\eqref{eq:twoscale-convergence} allows a choice $j(k)>j(k-1)$ such that
\[
 \iota_k(K)\subset V_{j(k)},\qquad
 E_k:=\eta_k^{-2}\iota_k^*(g_{j(k)}^\#-g_N),\qquad
 \|E_k\|_{C^k(K,g_E)}<\frac1k.
\]
Since $V_{j(k)}$ is open, the composition below is defined on a
neighbourhood of $K$. Set
\[
 \rho_k=\eta_kr_{j(k)},\qquad
 \Phi_k=(\psi_{j(k)}\circ\iota_k)|_{A_\Gamma}.
\]
Then $\rho_k\to0$, $s_{j(k)}\to T$, and
\[
 \Phi_k^*(\rho_k^{-2}g(s_{j(k)}))-g_E
 =E_k+\bigl(\eta_k^{-2}\iota_k^*g_N-g_E\bigr)
 \longrightarrow0
 \quad\text{in }C^\infty_{\mathrm{loc}}(A_\Gamma).
\]
This contradicts Lemma~\ref{lem:noannulus}. Thus $N$ is a complete
smooth gradient K\"ahler shrinker.

\smallskip\noindent
\emph{Exactness and classification.}
By \eqref{eq:twoscale-ratio}--\eqref{eq:twoscale-convergence},
Proposition~\ref{prop:exact} gives
$\Omega_N=d\lambda$ and excludes compact complex curves in $N$.
Moreover, $N$ is noncompact, since otherwise
\[
 0<\int_N\Omega_N^2
 =\int_Nd(\lambda\wedge\Omega_N)=0.
\]
To match the soliton scale in \cite[Section~1]{LW}, put
\[
 \begin{aligned}
 \widehat g&=2g_N,\qquad
 \widehat\Omega=2\Omega_N=d(2\lambda),\\
 \Ric(\widehat g)+\nabla_{\widehat g}^2h
 &=g_N=\tfrac12\widehat g,\qquad
 \nabla_{\widehat g}h=\tfrac12\nabla_{g_N}h.
 \end{aligned}
\]
Completeness and the real holomorphic gradient are preserved.
An additive constant in $\widehat h=h+c$ gives the potential
normalization $R_{\widehat g}+|\nabla\widehat h|_{\widehat g}^2
=\widehat h$ of \cite[Section~1, (1.1)--(1.2)]{LW}.
Thus \cite[Theorem~1.1]{LW}, which imposes no bounded-curvature
hypothesis, gives a biholomorphic isometry to one of the four
noncompact models:
\[
 \C^2,\qquad \Bl_0\C^2,\qquad
 \PP^1\times\C,\qquad \Bl_p(\PP^1\times\C).
\]
The last three contain compact holomorphic rational curves: the
exceptional curve in either blow-up, and $\PP^1\times\{z\}$ in the
product. They are therefore excluded by Proposition~\ref{prop:exact}.
The remaining model is the Gaussian shrinker, excluded by the
strictly negative entropy in Proposition~\ref{prop:twoscale}.
This contradiction proves \eqref{eq:typeI}.
\end{proof}


\bigskip
\noindent\textsc{Yeyun Xu}\\
The Fundamental Science Research Center for Geometry and Algebra,\\
School of Mathematical Sciences, East China Normal University,
Shanghai, China\\
\textit{E-mail address:}
\href{mailto:1322706807@qq.com}{\texttt{1322706807@qq.com}}

\medskip
\noindent\textsc{Linfeng Zhou}\\
The Fundamental Science Research Center for Geometry and Algebra,\\
School of Mathematical Sciences, East China Normal University,
Shanghai, China\\
\textit{E-mail address:}
\href{mailto:lfzhou@math.ecnu.edu.cn}{\texttt{lfzhou@math.ecnu.edu.cn}}

\begin{thebibliography}{99}

\bibitem{Markov}
N.~Adaloglou, J.~Brendel, J.~Evans, J.~Hauber, and F.~Schlenk,
\emph{Markov staircases} (2025).
\href{https://arxiv.org/abs/2509.03224v2}{arXiv:2509.03224v2}.

\bibitem{Auroux}
D.~Auroux,
\href{https://ocw.mit.edu/courses/18-966-geometry-of-manifolds-spring-2007/8de7efaaa019aef8237177a82020050c_lect12.pdf}{\emph{Symplectic Geometry}}, MIT 18.966 course notes, Spring 2007,
Lecture~12.

\bibitem{BamEntropy}
R.~H.~Bamler,
\emph{Entropy and heat kernel bounds on a Ricci flow background} (2021).
\href{https://arxiv.org/abs/2008.07093v3}{arXiv:2008.07093v3}.

\bibitem{BamCompact}
R.~H.~Bamler,
\emph{Compactness theory of the space of super Ricci flows},
Invent. Math. \textbf{233} (2023), no.~3, 1121--1277.
\href{https://arxiv.org/abs/2008.09298v2}{arXiv:2008.09298v2}.

\bibitem{BamStructure}
R.~H.~Bamler,
\emph{Structure theory of non-collapsed limits of Ricci flows} (2021).
\href{https://arxiv.org/abs/2009.03243v2}{arXiv:2009.03243v2}.

\bibitem{BCCD}
R.~H.~Bamler, C.~Cifarelli, R.~J.~Conlon, and A.~Deruelle,
\emph{A new complete two-dimensional shrinking gradient
K\"ahler--Ricci soliton},
Geom. Funct. Anal. \textbf{34} (2024), no.~2, 377--392.
\href{https://arxiv.org/abs/2206.10785v1}{arXiv:2206.10785v1}.

\bibitem{BottTu}
R.~Bott and L.~W.~Tu,
\emph{Differential Forms in Algebraic Topology},
Graduate Texts in Mathematics, Vol.~82, Springer, 1982.
\href{https://doi.org/10.1007/978-1-4757-3951-0}{doi:10.1007/978-1-4757-3951-0}.

\bibitem{CdS}
A.~Cannas da Silva,
\href{https://people.math.ethz.ch/~acannas/Papers/lsg.pdf}{\emph{Lectures on Symplectic Geometry}},
Lecture Notes in Mathematics, Vol.~1764, Springer, 2001 (2006 version).

\bibitem{CNPP}
C.~Caubel, A.~N\'emethi, and P.~Popescu-Pampu,
\emph{Milnor open books and Milnor fillable contact 3-manifolds},
Topology \textbf{45} (2006), 673--689.
\href{https://arxiv.org/abs/math/0409160v2}{arXiv:math/0409160v2}.

\bibitem{CP}
H.~Choi and J.~Park,
\emph{A Lefschetz fibration on minimal symplectic fillings of a quotient
surface singularity} (2019).
\href{https://arxiv.org/abs/1802.03304v2}{arXiv:1802.03304v2}.

\bibitem{CCHZ}
C.~Cifarelli, R.~J.~Conlon, M.~Hallgren, and J.~Zhang,
\emph{Finite time singularities of the Ricci flow on compact
K\"ahler surfaces are of Type I} (2026).
\href{https://doi.org/10.48550/arXiv.2609.16733}{arXiv:2609.16733}.

\bibitem{CHM}
R.~J.~Conlon, M.~Hallgren, and Z.~Ma,
\emph{Non-collapsed finite time singularities of the Ricci flow on compact
K\"ahler surfaces are of Type I} (2025).
\href{https://arxiv.org/abs/2502.19804v2}{arXiv:2502.19804v2}.

\bibitem{Demailly}
J.-P.~Demailly,
\href{https://www-fourier.univ-grenoble-alpes.fr/~demailly/manuscripts/agbook.pdf}{\emph{Complex Analytic and Differential Geometry}},
2012.

\bibitem{DS}
R.~Dervan and G.~Sz\'ekelyhidi,
\emph{The K\"ahler--Ricci flow and optimal degenerations} (2018).
\href{https://arxiv.org/abs/1612.07299v5}{arXiv:1612.07299v5}.

\bibitem{FG}
O.~Fujino and Y.~Gongyo,
\emph{Log pluricanonical representations and abundance conjecture} (2012).
\href{https://arxiv.org/abs/1104.0361v3}{arXiv:1104.0361v3}.

\bibitem{Hatcher}
A.~Hatcher,
\href{https://pi.math.cornell.edu/~hatcher/AT/AT.pdf}{\emph{Algebraic Topology}},
Cambridge University Press, Cambridge, 2002.

\bibitem{HunterSobolev}
J.~K.~Hunter,
\href{https://www.math.ucdavis.edu/~hunter/pdes/pde_notes.pdf}{\emph{Notes on Partial Differential Equations}},
University of California, Davis, 2014.

\bibitem{Lee}
J.~M.~Lee,
\emph{Introduction to Smooth Manifolds}, second edition,
Graduate Texts in Mathematics, Vol.~218, Springer, 2013.
\href{https://doi.org/10.1007/978-1-4419-9982-5}{doi:10.1007/978-1-4419-9982-5}.

\bibitem{LZnef}
T.-J.~Li and W.~Zhang,
\emph{$J$-holomorphic curves in a nef class} (2015).
\href{https://arxiv.org/abs/1210.3337v3}{arXiv:1210.3337v3}.

\bibitem{LW}
Y.~Li and B.~Wang,
\emph{On K\"ahler Ricci shrinker surfaces} (2025).
\href{https://arxiv.org/abs/2301.09784v2}{arXiv:2301.09784v2}.

\bibitem{Lipman}
J.~Lipman,
\href{https://www.numdam.org/item/PMIHES_1969__36__195_0.pdf}{\emph{Rational singularities with applications to algebraic surfaces
and unique factorization}},
Publ. Math. Inst. Hautes \'{E}tudes Sci. \textbf{36} (1969), 195--279.

\bibitem{MT}
M.~Miao and G.~Tian,
\emph{A note on K\"ahler--Ricci flow on Fano threefolds},
Peking Math. J. \textbf{8} (2025), no.~1, 191--199.
\href{https://doi.org/10.1007/s42543-023-00078-0}{doi:10.1007/s42543-023-00078-0}.

\bibitem{Stacks}
The Stacks Project Authors,
\emph{The Stacks Project}.
\href{https://stacks.math.columbia.edu/tag/0B8Y}{Tag~0B8Y},
\href{https://stacks.math.columbia.edu/tag/0B3P}{Tag~0B3P},
and \href{https://stacks.math.columbia.edu/tag/02SE}{Tag~02SE}.

\bibitem{TianDelPezzo}
G.~Tian,
\emph{On Calabi's conjecture for complex surfaces with positive first
Chern class}, Invent. Math. \textbf{101} (1990), 101--172.
\href{https://doi.org/10.1007/BF01231499}{doi:10.1007/BF01231499}.

\bibitem{TZcollapse}
V.~Tosatti and Y.~Zhang,
\emph{Finite time collapsing of the K\"ahler--Ricci flow on threefolds},
Ann. Sc. Norm. Super. Pisa Cl. Sci. (5) \textbf{18} (2018), no.~1, 105--118.
\href{https://arxiv.org/abs/1507.08397v2}{arXiv:1507.08397v2}.

\bibitem{WZ}
X.-J.~Wang and X.~Zhu,
\emph{K\"ahler--Ricci solitons on toric manifolds with positive first
Chern class}, Adv. Math. \textbf{188} (2004), no.~1, 87--103.
\href{https://doi.org/10.1016/j.aim.2003.09.009}{doi:10.1016/j.aim.2003.09.009}.

\bibitem{WendlLectures}
C.~Wendl,
\emph{Lectures on holomorphic curves in symplectic and contact geometry} (2014).
\href{https://arxiv.org/abs/1011.1690v2}{arXiv:1011.1690v2}.

\bibitem{XZ}
T.~Xu and Z.~Zhang,
\emph{Finite Time Singularities of Collapsing K\"ahler Ricci Flow on
Ruled Surfaces} (2026).
\href{https://arxiv.org/abs/2609.01442v1}{arXiv:2609.01442v1}.

\bibitem{XZTypeI}
T.~Xu and Z.~Zhang,
\emph{Collapsed Finite Time Singularities of the K\"ahler-Ricci Flow
on Complex Surfaces are of Type I} (2026).
\href{https://doi.org/10.48550/arXiv.2609.18834}{arXiv:2609.18834}.

\end{thebibliography}
\end{document}